\newtheorem{thm}{Theorem}[section]
\newtheorem{lem}[thm]{Lemma}
\newtheorem{prop}[thm]{Proposition}
\newtheorem{cor}[thm]{Corollary}
\theoremstyle{definition}
\newtheorem{defn}[thm]{Definition}
\theoremstyle{remark}
\newtheorem{rem}{Remark}[section]
\newtheorem{exam}[rem]{Example}
\numberwithin{equation}{section}
\def\R{\mathbb R}
\def\N{\mathbb N}
\def\d{\mathrm d}
\DeclareMathOperator{\diag}{diag}
\DeclareMathOperator{\supp}{supp}
\def\D{\mathrm D}
\def\i{\mathrm i}
\def\e{\mathrm e}
\begin{document}

%TITLE AND DATA
\title{On a wave equation with singular dissipation}
\author{Mohammed Elamine Sebih}
\address{Mohammed Sebih, Laboratory of Analysis and Control of Partial Differential Equations, {Djillali Liabes University}, {22000 Sidi Bel Abbes, {Algeria}}}
\email{sebihmed@@gmail.com}
\author{Jens Wirth*}
\address{Jens Wirth, { Department of Mathematics}, {University of Stuttgart}, { Pfaffenwaldring 57, 70569 Stuttgart, {Germany}}}
\email{jens.wirth@@mathematik.uni-stuttgart.de}

%\authormark{MOHAMMED SEBIH \textsc{et al.}}

%\address[2]{\orgdiv{ Department of Mathematics}, \orgname{ University of Stuttgart}, \orgaddress{ Pfaffenwaldring 57, 70569 Stuttgart, \country{GERMANY}}}
%{mohammed.sebih@mathematik.uni-stuttgart.de}
%}

\begin{abstract}
In this paper we consider a singular wave equation with distributional and more singular non-distributional coefficients and develop tools and techniques for the phase-space analysis of such problems. In particular we provide a detailed analysis for the interaction of singularities of solutions with strong singularities of the coefficient in a model problem of recent interest. 
\end{abstract}

%\keywords{singular hyperbolic problems, non-distributional coefficients, very weak solutions}

%\jnlcitation{\cname{%
%\author{Sebih M}, \author{Wirth J}}
%\ctitle{On a wave equation with singular dissipation}.
%\cjournal{}. \cyear{};\cvol{}. \url{https://doi.org/}}

\maketitle

%\tableofcontents

\section{Introduction}

In a recent paper, Munoz, Ruzhansky and Tokmagambetov \cite{MRT17} investigated a particular wave model with singular dissipation arising from acoustic problems. They considered the Cauchy problem
\begin{equation*}
\begin{split}
u_{tt}-\Delta u+\frac{b^{\prime }(t)}{b(t)}u_{t}=0, \qquad
u(0,x)=u_{0}(x), \quad u_{t}(0,x)=u_{1}(x),
\end{split}
\end{equation*}
where $b$ is a piecewise continuous and positive function allowing in particular for jumps and in consequence a non-distributional singular coefficient in this Cauchy problem. They considered the notion of very weak solutions for this singular problem and showed that this problem is well-posed in this very weak sense. Moreover, they numerically observed in one space dimension a very interesting phenomenon, namely the appearance of a new wave after the singular time travelling in the opposite direction to the main one.

The aim of this paper is two-fold. On one hand we consider this model and carry out a detailed phase space analysis for families of regularised problems in order to describe the behaviour of the very weak solution in the vicinity of the singular time. This will allow us to show that the numerically observed partial reflection of wave packets at the singular time is really appearing and to calculate the partial reflection indices in terms of the jump of the coefficient. On the other hand this is a model study to develop tools and techniques to treat more general singular hyperbolic problems within the framework of very weak solutions and to provide a symbolic calculus framework for analysing singularities of such solutions. 

\section{The notion of very weak solutions}
We will recall some basic concepts on the notion of very weak solutions for singular problems and comment on their relation to other solution concepts like weak solutions and Colombeau solutions. The concept was introduced by Garetto and Ruzhansky in \cite{GaRu} and further developed in a series of papers with different co-authors, \cite{RT17}, \cite{RT17b}, \cite{MRT17}, \cite{RT18}, \cite{MRT19}  in order to show a wide applicability. The basic idea is as follows. Instead of considering the singular equation itself, one considers a family of regularised equations depending on a regularisation parameter and investigates the behaviour of the family of solutions as the regularisation parameter tends to zero. 

Treating distributions and more singular objects as families of regularised objects has a long history. In order to provide a neat solution for the multiplication problem for distributions (on the background of Schwartz's famous impossibility result in  \cite{Schwartz54}) Colombeau \cite{Col} proposed to consider more general algebras of nets of regularised objects modulo negligible nets 
\begin{equation} \mathcal E^\infty(\Omega)  / \mathcal N^\infty(\Omega), \end{equation}
where $\mathcal E^\infty(\Omega)$ denotes all functions $ (0,1] \ni\epsilon\mapsto f_\epsilon\in \mathrm C^\infty(\Omega)$ being moderate in the sense that 
\begin{equation}\label{eq:2.2}
   \sup_{x\in K} |\partial^\alpha f_\epsilon(x) | = \mathcal O(\epsilon^{N-|\alpha|})
\end{equation}
for some $N\in \mathbb R$ depending on $K\Subset \Omega$ and the multi-index $\alpha$ and similarly $\mathcal N^\infty(\Omega)$ the space of negligible nets satisfying the estimate \eqref{eq:2.2} for any number $N\in\R$. Convolution with Friedrichs mollifiers yields an embedding of both smooth functions $\mathrm C^\infty(\Omega)$ and distributions $\mathcal D'(\Omega)$ into this algebra extending in particular multiplication of smooth functions. For more details, see Oberguggenberger \cite{Oberguggenberger}.

This approach has a serious drawback as the multiplication in these algebras is only consistent with the multiplication of smooth functions, and hence, in general not consistent with the algebra structure of continuous or measurable functions. This is in particular problematic when applying this concept to well-posedness issues of singular partial differential equations, where the natural spaces are usually of lower regularity than $\mathrm C^\infty$. To overcome consistency issues, in \cite{GaRu} and later in \cite{RT17}, \cite{RT17b} Ruzhansky and his co-authors introduced a different concept of moderateness and negligibility based on natural norms associated to the problem under consideration. 

For hyperbolic partial differential equations it seems natural to consider solutions of finite energy and the modification in the approach would be to call a family of solutions moderate if the energy satisfies a polynomial bound with respect to the regularisation parameter, while negligible nets are such that the energy is smaller than any power of the regularisation parameter. Thus the notion of very weak solutions depends on the equation under consideration (in contrast to distributional and Colombeau solutions, but similar to weak or mild solutions). 

To make this precise, let us define the notion of very weak solutions for wave equations with singular time-dependent coefficients
\begin{equation}
   \partial_t^2 u - a(t) \Delta u + 2b(t) \partial_t u + m^2(t) u = f
\end{equation}
for a given singular right-hand side $f$ and singular coefficients $a,b,m$.  We say that a net $\epsilon\mapsto u_\epsilon
\in \mathrm C^\infty([0,T] ; \mathrm H^1)$ is a \emph{very weak solution of (finite) energy type}, if there are moderate regularisations 
$a_\epsilon,b_\epsilon,m_\epsilon \in \mathcal E^\infty([0,T])$ of the coefficients and a $\mathrm C^\infty([0,T]; \mathrm L^2)$-moderate regularisation of the right-hand side $f$ in the sense that
\begin{equation}
   \sup_{t\in [0,T]} \|\partial^k_t f_\epsilon(t,\cdot) \|_{\mathrm L^2} = \mathcal O(\epsilon^{N-k})
\end{equation}
for some number $N\in\R$, such that $ \partial_t^2 u_\epsilon - a_\epsilon(t) \Delta u_\epsilon + 2b_\epsilon(t) \partial_t u_\epsilon + m_\epsilon^2(t) u_\epsilon = f_\epsilon$ holds for any $\epsilon>0$ and $u_\epsilon$ itself is $\mathrm C^\infty([0,T]; \mathrm H^1)$-moderate in the sense that
\begin{equation}
  \sup_{t\in [0,T]} \|\partial^k_t u_\epsilon(t,\cdot) \|_{\mathrm H^1} = \mathcal O(\epsilon^{N-k})
\end{equation}
holds for some $N\in\R$.

Based on results from \cite{W6} and \cite{W5}, it was shown in \cite{MRT17} that the model example we will consider later is well-posed in this very weak sense and that the very weak solution is independent of the choice of the regularising family. For the general singular wave model with singular speed and mass term see \cite{ART19}.

\section{Our model problem and general strategy}
We consider the Cauchy problem
\begin{equation}
\begin{split}\label{Model}
u_{tt}-\Delta u+\frac{b^{\prime }(t)}{b(t)}u_{t}=0 ,\qquad u(0,x)=u_{0}(x),\quad u_{t}(0,x)=u_{1}(x),
\end{split}
\end{equation}
where $b$ is a piecewise smooth and piecewise continuous function. We are interested in solutions close to a singularity of the coefficient and hence, without loss of generality, we assume that $b$ has exactly one jump at $t=1$. In particular, we
require that the limits 
 \begin{equation}b(1_{\pm0})=\lim_{t\rightarrow 1\pm0} b(t)\end{equation}
exist for the function itself and also its derivatives. Thus, we ask for $b$ to satisfy the following two assumptions:
\begin{description}
\item[(H1)]   There exists a strictly positive number $b_{0}$ such that $b(t)\geq b_{0}>0$.
\item[(H2)]   $b\in C^{\infty}_b ( -\infty,1]  \cap C^{\infty}_b [1,+\infty)$, having a jump at $t=1$.
\end{description}
In contrast to \cite{MRT17} we do \emph{not} require $b$ to be monotonically increasing. Thus, we will not make use of any sign properties of the coefficient later.

\subsection{Notation}\label{sec:Notation}
Throughout this paper we will use the following conventions and symbols:
\begin{itemize}
\item
 We denote the height of the jump of $b$ at $t=1$ by $h=b(1_{+0})-b(1_{-0})$ and denote $H = \frac{b(1_{-0})}{b(1_{+1})}$.
 \item
We write $f\lesssim g$ for two functions $f$ and $g$ on the same domain if there exists a positive constant $C$ such that $f\leq Cg$.
\item 
We denote by $\Phi \in \mathrm{C}_{0}(\mathbb{R})$ a fixed non-negative, continuous and symmetric function, such that 
\begin{equation}
   \Phi(-t)=\Phi(t)\qquad\text{and}\qquad \supp\Phi=[-K',K']
\end{equation}
holds. We further assume that $\Phi$ is differentiable outside the origin and that
\begin{equation}\label{eq:PhiSquEst1}
    \Phi^2(t) \lesssim \begin{cases}  \Phi'(t) ,\qquad & t<0,\\ -\Phi'(t) , & t>0, \end{cases}
\end{equation}
holds.
This function will play an important role in the definition of zones and symbol classes and will be referred to as the shape function.
\item
 We denote by $\psi\in \mathrm{C}_{0}^{\infty }(\mathbb{R})$ a fixed non-negative and symmetric mollifier such that 
 \begin{equation} 
 \psi(-t)=\psi(t),\qquad \supp\psi=[-K,K], \qquad\text{ and }\qquad \int \psi(t)\,\mathrm{d}t=1
 \end{equation}
 with $0<K\le K'$ describing the size of its support. We further require that derivatives of $\psi$ be bounded by powers of the shape function $\Phi$, i.e.
 \begin{equation}
     |\partial^k_t \psi(t) | \lesssim \Phi^k(t)
 \end{equation}
 for any number $k\in\N$.
\item The identity matrix will be denoted by $\mathrm I$. Furthermore for any matrix $A$ we denote by $\|A\|$ its Euclidean matrix norm. 
\end{itemize}

\subsection{Regularisation of the problem}
In order to consider very weak solutions of our model problem, we solve families of regularised problems using the regularisations 
\begin{equation}
\begin{split}\label{beps}
b_{\epsilon }(t)=b\ast \psi _{\epsilon }(t) \qquad\text{and}\qquad
b_{\epsilon }^{\prime }(t)=b^{\prime }\ast \psi _{\epsilon }(t) = b \ast \psi_\epsilon^\prime(t)
\end{split}
\end{equation}
in terms of the mollifier $\psi _{\epsilon }(t)=\epsilon ^{-1}\psi ( \epsilon ^{-1}t) 
$ and with $\epsilon \in \left( 0,1\right]$. This gives rise to the family of Cauchy problems
\begin{equation}
\begin{split}
u_{tt}-\Delta u+\frac{b_{\epsilon }^{\prime }(t)}{b_{\epsilon }(t)}u_{t}=0,\qquad
u(0,x)=u_{0}(x),\quad u_{t}(0,x)=u_{1}(x)
\end{split}
\label{Regularised pb}
\end{equation}
parameterised by $\epsilon\in(0,1]$. Our approach is based on a detailed phase space analysis for this family of problems treating $\epsilon$ as an additional variable of the extended phase space. For this, we will introduce two zones and apply a diagonalisation based technique to extract leading order terms in each of them. For details on the diagonalisation procedure and its use in a related singular context we refer to \cite{NW15} or \cite{W16}, and for a broader discussion of the techniques used see \cite{RW14}. 

As the coefficients of \eqref{Regularised pb} depend on $t$ only, we apply a partial Fourier transform with respect to the spatial variables and, thus, reduce consideration to the ordinary differential equation
\begin{equation}
\begin{split}
\hat{u}_{tt}+\vert \xi \vert ^{2} \hat{u}+\frac{b_{\epsilon }^{\prime }(t)}{b_{\epsilon }(t)}\hat{u}_{t}=0,\qquad
\hat{u}(0,\xi)=\hat{u}_{0}(\xi),\quad \hat{u}_{t}(0,\xi)=\hat{u}_{1}(\xi)
\label{Eqn in phase space}
\end{split}
\end{equation}
parameterised by both $\epsilon\in(0,1]$ and $\xi\in\R^n$. We construct its solutions for $t\in[0,2]$ and investigate the limiting behaviour of solutions as $\epsilon\to0$. To write the equation in system form, we introduce the micro-energy
\begin{equation}
U(t,\xi,\epsilon )=\begin{pmatrix} \left\vert \xi \right\vert \hat{u} \\ \D_{t}\hat{u}\end{pmatrix},
\end{equation}
where $\D_{t}=-\i\partial _{t}$ denotes the Fourier derivative. Then (\ref{Eqn in phase space}) can be rewritten as
\begin{equation}
\D_{t}U(t,\xi,\epsilon )=\left[ \begin{pmatrix}
0 & \vert \xi \vert \\ 
\vert \xi \vert & 0
\end{pmatrix}+\begin{pmatrix}
0 & 0 \\ 
0 & \i\mathfrak d_{\epsilon}(t)
\end{pmatrix}\right] U(t,\xi,\epsilon ),  \label{Transf to system}
\end{equation}
where we used the notation $\mathfrak d_{\epsilon}(t)=\frac{b_{\epsilon }^{\prime }(t)}{b_{\epsilon }(t)}$ for the net of dissipation coefficients. Denoting the coefficient matrices arising in this system by
\begin{equation}
A(\xi)=\begin{pmatrix} 
0 & \vert \xi \vert \\ 
\vert \xi \vert & 0
\end{pmatrix}\quad \text{  and  }\quad
B(t,\epsilon)=\begin{pmatrix} 
0 & 0 \\ 
0 & \i\mathfrak d_{\epsilon}(t)
\end{pmatrix},
\end{equation}
we see that depending on the values $|\xi|$, $\epsilon$ and $t$ either the matrix $A(\xi)$ or the matrix $B(t,\epsilon)$ is dominant. If $A(\xi)$ is dominant, we apply a standard hyperbolic approach and diagonalise the system. If $B(t,\epsilon)$ is dominant, we use a transformation of variables to reduce consideration to a model equation describing the behaviour close to the singularity. 
 
 \subsection{Zones}
 To make use of different leading terms, we use the following definition of zones. For a zone constant $N$  to be fixed later we define the \emph{hyperbolic zone}
\begin{equation}
Z_{\rm hyp}(N)=\left\{( t,\xi,\epsilon) \in [0,2]\times\mathbb{R}^{n}\times(0,1] \mid |\xi|\geq N\big(\Phi_{\epsilon }(t-1)+1\big)\right\rbrace ,
\end{equation}
where $\Phi_{\epsilon }(t)=\epsilon ^{-1}\Phi \left( \epsilon ^{-1}t\right) $ is defined in terms of the function $\Phi$ from Section \ref{sec:Notation}. The \emph{singular zone}
\begin{equation}
Z_{\rm sing}(N)=\left\{( t,\xi,\epsilon) \in [0,2]\times\mathbb{R}^{n}\times(0,1] \mid  N < |\xi|\leq N\big(\Phi_{\epsilon }(t-1)+1\big)\right\} 
\end{equation}
is used to investigate the vicinity of the jump of the coefficient, while the remaining \emph{bounded frequencies} 
\begin{equation}
Z_{\rm bd}(N)=\left\{( t,\xi,\epsilon) \in [0,2]\times\mathbb{R}^{n}\times(0,1] \mid |\xi|\leq N \right\} 
\end{equation}
will be dealt with later by a simple argument. The common boundary of the hyperbolic and the singular zone will be denoted by 
$(t_{\xi_{i}}(\epsilon))_{i=1,2}$ and is defined implicitly by the equation 
\begin{equation}\label{eq:txi-def}
|\xi| = N\big(\Phi_{\epsilon }(t_{\xi_{i}}-1)+1\big)
\end{equation} 
for $\xi$ satisfying $N<|\xi|\leq N(\epsilon^{-1}\Phi(0)+1)$ and with the convention that $t_{\xi_{1}}$ is the solution branch for $t < 1$ and $t_{\xi_{2}}$ when $t > 1$. The zones are depicted in Figure~\ref{Fig1} for fixed $\epsilon>0$.

\begin{figure}[b]
\centering
\includegraphics[height=6.4cm]{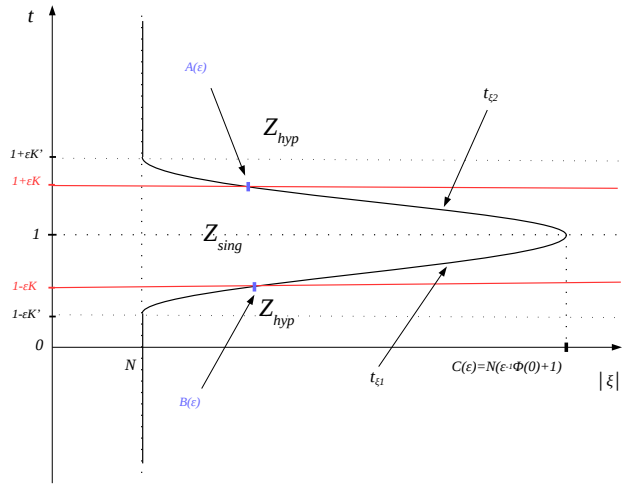}
\caption{Zones in coordinates $(t,\xi)$ for fixed $\epsilon>0$}\label{Fig1}
\end{figure}

The singular zone $Z_{\rm sing}(N)$ is better understood in the variables  $\Lambda = \epsilon|\xi|$ and $\tau = \epsilon^{-1}(t-1)$. Then the definition of the singular zone can be rewritten as
\begin{equation}
Z_{\rm sing}(N)=\left\{( \tau,\Lambda,\epsilon) \mid  N\epsilon < \Lambda \leq N \Phi(\tau)+N\epsilon\right\rbrace
\end{equation}
and stabilises as $\epsilon\to0$. We will use these singular variables when discussing the solutions of the regularised problem in the singular zone. For convenience, the zone is depicted in Figure~\ref{Fig2} using these variables. We will also use a notation for the zone-boundary and denote it by  $\tau_{\Lambda_{1}}(\epsilon)$ and $\tau_{\Lambda_{2}}(\epsilon)$.

\begin{figure}[t]
\centering
\includegraphics[height=6.4cm]{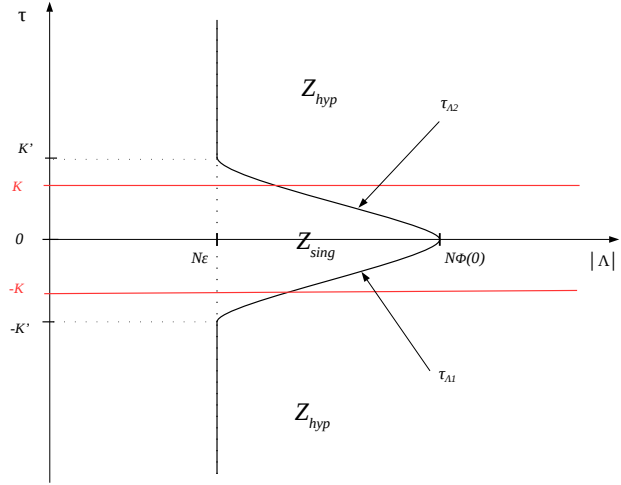}
\caption{Zones in coordinates $(\tau,\Lambda)$ again for a fixed $\epsilon>0$}
\label{Fig2}
\end{figure}

Our strategy is as follows. Within the hyperbolic zone we will apply a diagonalisation procedure taking care of the $\epsilon$-dependence of the transformation matrices and all appearing symbols in an appropriate way. This allows to construct the fundamental solution of the parameter-dependent family \eqref{Regularised pb} within the hyperbolic zone and to investigate its limiting behaviour as $\epsilon\to0$. Within the singular zone, we transform the problem to the singular variables and construct its fundamental solution as power series in $\Lambda$ with $\tau,\epsilon$-dependent coefficients and again study the limiting behaviour of this solution as $\epsilon\to0$.

\begin{rem}
We note that in coordinates $(t,\xi)$ the point $C(\epsilon)$ tends to $\infty$ when $\epsilon$ tends to $0$ and that $t_{\min}$ and $t_{\max}$ depend on $\epsilon$ and tend to $1_{-0}$ and $1_{+0}$ when $\epsilon\rightarrow 0$, respectively.  
\end{rem}

\begin{rem}
The interval $\left[1-\epsilon K ,1+\epsilon K\right]$ is the support of $\psi_{\epsilon}(t-1)$. The lines $t=1-\epsilon K$ and $t=1+\epsilon K$ divide the hyperbolic zone into two parts, one with $\vert t-1\vert > \epsilon K$ and the other one with $\vert t-1\vert < \epsilon K$. The last one is of minor interest since the points $A(\epsilon)=N\epsilon^{-1}\Phi(K)+N$ and $B(\epsilon)$ tend to infinity when $\epsilon$ tends to $0$. 
\end{rem}

\subsection{Regular faces of the zones.} The hyperbolic zone $Z_{\rm hyp}(N)$ and the zone of bounded frequencies $Z_{\rm bd}(N)$ have a boundary on which $\epsilon\to0$. This will be of importance later when relating our representation of very weak solutions to the standard theory for smooth coefficients for $t\ne 1$. 
We refer to the two parts $\{ (t,\xi,0) \mid |\xi|>N,\; t\ne 1\}$ as the \emph{regular face} of $Z_{\rm hyp}(N)$ and 
the set $\{ (t,\xi,0) \mid |\xi|\le N\}$ as the \emph{regular face} of $Z_{\rm bd}(N)$. 
The singular zone does not have a regular face.

\section{Representation of solutions}
\subsection{Some useful lemmata}
The nets $\mathfrak d_{\epsilon}(t)={b_{\epsilon }^{\prime }(t)}/{b_{\epsilon }(t)}$, $b_{\epsilon }(t)$ and its derivatives $b_{\epsilon }^{(k)}(t)$ defined in terms of \eqref{beps} satisfy the following inequalities.
\begin{lem}\label{lem:B_epsilon}
The estimates
\begin{equation}
\begin{split}
|\partial_t^k b_\epsilon(t)| \le C_{1,k} \big(\Phi_\epsilon(t-1)+1\big)^{k}\qquad\text{and}\qquad
\vert\partial_{t}^{k}\mathfrak d_{\epsilon}(t)\vert\leq C_{2,k}  \big( \Phi_{\epsilon}(t-1)+1 \big)^{k+1}
\end{split}
\end{equation}
hold for all $k\geq0$ and all $t\in[0,2]$ and $\epsilon\in(0,1]$.
\end{lem}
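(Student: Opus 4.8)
The plan is to establish both families of estimates by direct estimation from the mollifier definition \eqref{beps}, using Hypotheses (H1) and (H2) together with the size bounds on derivatives of $\psi$ built into the shape function $\Phi$ in Section~\ref{sec:Notation}. The second estimate for $\mathfrak d_\epsilon = b_\epsilon'/b_\epsilon$ will then follow from the first by the quotient rule, once we also have a lower bound $b_\epsilon(t)\geq b_0>0$, which is immediate since $\psi_\epsilon\geq 0$ has unit mass and $b\geq b_0$ by (H1).

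First I would prove the bound on $\partial_t^k b_\epsilon$. Since $b$ is bounded together with all its derivatives away from $t=1$ by (H2), and $\partial_t^k b_\epsilon(t) = (b * \psi_\epsilon^{(k)})(t) = \epsilon^{-k}(b*(\psi^{(k)})_\epsilon)(t)$, one splits into two regimes. For $|t-1|$ comparable to $1$ or larger (say outside the support $[1-\epsilon K,1+\epsilon K]$ of $\psi_\epsilon(\cdot-1)$, where in fact all derivatives of the mollification beyond order zero are controlled by the smooth one-sided behaviour of $b$), the derivatives of $b_\epsilon$ are bounded by constants depending only on $b$, and since $\Phi_\epsilon(t-1)+1\geq 1$ this gives the claim with no $\epsilon$-loss. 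For $|t-1|\leq \epsilon K$, one writes $\partial_t^k b_\epsilon(t) = \int b(t-s)\,\epsilon^{-k-1}\psi^{(k)}(s/\epsilon)\,\d s$ and uses $|\psi^{(k)}(\sigma)|\lesssim \Phi^k(\sigma)$ together with the scaling identity $\epsilon^{-k-1}\Phi^k(s/\epsilon) = \epsilon^{-1}\big(\epsilon^{-1}\Phi(s/\epsilon)\big)^k \cdot \epsilon^{k-1}\cdot\epsilon^{-k+1}$—more cleanly, $\Phi_\epsilon(s)^k = \epsilon^{-k}\Phi(s/\epsilon)^k$, so that $\epsilon^{-k}\psi^{(k)}(s/\epsilon) \lesssim \Phi(s/\epsilon)^k \epsilon^{-k} = \Phi_\epsilon(s)^k$. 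Since $\|b\|_{L^\infty}$ is finite and $\psi_\epsilon$ effectively integrates over an interval of length $\sim\epsilon$ on which $\Phi_\epsilon$ is of size $\sim\epsilon^{-1}$, one bounds the integral by a constant times $\big(\Phi_\epsilon(t-1)+1\big)^k$ after observing that $\Phi_\epsilon(t-1)$ and $\Phi_\epsilon(s)$ are comparable for $s$ in the support. The symmetry and support properties of $\psi$ and the positivity of $\Phi$ are exactly what make this comparison work.

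Given the first estimate, the second follows by the Leibniz rule applied to $\mathfrak d_\epsilon = b_\epsilon' \cdot (1/b_\epsilon)$. One needs bounds on $\partial_t^j(1/b_\epsilon)$; by Faà di Bruno (or an easy induction) these are sums of products $b_\epsilon^{-(m+1)}\prod_i \partial_t^{k_i} b_\epsilon$ with $\sum k_i = j$, each $k_i\geq 1$, hence bounded by $b_0^{-(m+1)}\prod_i C_{1,k_i}\big(\Phi_\epsilon(t-1)+1\big)^{k_i} \lesssim \big(\Phi_\epsilon(t-1)+1\big)^{j}$. Multiplying by the estimate for $\partial_t^{k-j}b_\epsilon'$, which is $\lesssim \big(\Phi_\epsilon(t-1)+1\big)^{k-j+1}$, and summing over $j$ gives the exponent $(k-j+1)+j = k+1$ uniformly, which is precisely the claimed bound.

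I expect the main obstacle to be the careful bookkeeping near $t=1$: one must verify that on the mollification support the quantity $\Phi_\epsilon(t-1)+1$ genuinely controls $\epsilon^{-1}$ (so that the $k$-th derivative, which a priori costs $\epsilon^{-k}$, is absorbed), and simultaneously that away from the support no $\epsilon$-powers are lost because $b$ is genuinely smooth on each side. This hinges on the interplay between the support size $K$ of $\psi$, the support $K'$ of $\Phi$ with $K\leq K'$, and the monotonicity/growth of $\Phi_\epsilon$; the hypotheses on $\psi$ and $\Phi$ in Section~\ref{sec:Notation} were chosen precisely to make these comparisons go through, so the argument is essentially a matter of assembling them correctly rather than of any genuine difficulty.
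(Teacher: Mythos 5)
The regime $|t-1|>\epsilon K$, the uniform lower bound $b_\epsilon\ge b_0$, and the reduction of the $\mathfrak d_\epsilon$-estimate to the $b_\epsilon$-estimate via the quotient rule and Fa\`a di Bruno are all fine and match the paper's proof. The gap lies in your treatment of the central regime $|t-1|\le\epsilon K$. Bounding $|\partial_t^k b_\epsilon(t)|\le\|b\|_\infty\int|\psi_\epsilon^{(k)}(s)|\,\d s$ gives only a \emph{uniform} bound of order $\epsilon^{-k}$ over that interval, and your assertion that ``$\Phi_\epsilon(t-1)$ and $\Phi_\epsilon(s)$ are comparable for $s$ in the support'' is false: $s$ sweeps the entire support $[-\epsilon K,\epsilon K]$, where $\Phi_\epsilon(s)$ reaches values up to $\epsilon^{-1}\Phi(0)$, while $t-1$ is a single fixed point that can sit near the edge. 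In the permitted case $K=K'$ the shape function satisfies $\Phi(\pm K)=0$, so at $t-1=\pm\epsilon K$ the claimed right-hand side is $C_{1,k}(\Phi_\epsilon(t-1)+1)^k=C_{1,k}$, an $\epsilon$-\emph{independent} constant, whereas your bound delivers only $\mathcal O(\epsilon^{-k})$. These differ by an arbitrary power of $\epsilon^{-1}$, so your estimate does not imply the stated one.

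The pointwise localisation built into $(\Phi_\epsilon(t-1)+1)^k$ is not an artefact of the $L^1$ scaling of $\psi_\epsilon^{(k)}$; it records the fact that away from $t=1$ on the scale $\epsilon$ the mollification only sees the two smooth one-sided pieces of $b$. This is exactly what the paper's integration-by-parts argument extracts: splitting the convolution at $s=1$ and integrating by parts $k$ times produces the boundary terms $\big[b^{(k-\ell-1)}(1_{-0})-b^{(k-\ell-1)}(1_{+0})\big]\,\partial_t^{\ell}\psi_\epsilon(t-1)$ for $0\le\ell\le k-1$, plus integrals of $b^{(k)}$ against $\psi_\epsilon$ over each half-line, the latter being uniformly bounded by (H2). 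The boundary terms are \emph{evaluated at} $t-1$ rather than integrated over the support, so the pointwise control of the derivatives of $\psi$ by powers of the shape function gives precisely $|\partial_t^{\ell}\psi_\epsilon(t-1)|\lesssim(\Phi_\epsilon(t-1)+1)^k$. This decomposition is the missing step; once the crude $L^1$ bound is replaced by it, the remainder of your proposal goes through.
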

\begin{proof}
The second estimate will follow from the first one, so we only concentrate on the first. For $k=0$ we have by that by Assumptions \textbf{(H1)} and \textbf{(H2)} and the positivity of the mollifier $\psi$ 
\begin{equation}
   0 < b_0 \le b_\epsilon(t) = \int_{-\infty}^\infty b(t-\epsilon s) \psi(s) \d s \le \max_{s\in[- K,1+ K]} b(s).
\end{equation}
For $k=1$ we apply integration by parts. As
\begin{equation}
\begin{split}
b_\epsilon'(t) &=  \epsilon^{-2}\int_{-\infty}^{\infty}b(s)\psi^{\prime}(\epsilon^{-1}(t-s))\d s
= 
 \epsilon^{-2}\,\bigg[\int_{-\infty}^{1}b(s)\psi^{\prime}(\epsilon^{-1}(t-s))\d s + \int_{1}^{\infty}b(s)\psi^{\prime}(\epsilon^{-1}(t-s))\d s \bigg] \\
 &= \epsilon^{-1}  \psi(\epsilon^{-1} (t-1)) \big[ b(1_{-0}) - b(1_{+0}) \big]   -  \epsilon^{-1}\,\bigg[\int_{-\infty}^{1}b'(s)\psi(\epsilon^{-1}(t-s))\d s + \int_{1}^{\infty}b'(s)\psi(\epsilon^{-1}(t-s))\d s \bigg] ,
\end{split}
\end{equation}
we obtain 
\begin{equation}
  | b'_\epsilon(t) | \le |h| \psi_\epsilon(t-1) + \sup_{s\in[- K,1)\cup(1,1+ K]} |b'(s)|
  \le C_1 \left( \Phi_\epsilon(t-1) +  1\right)
\end{equation}
using the bound $\psi\lesssim \Phi$. 
For higher $k$ we need to apply several steps of integration by parts. For $k\ge2$ we obtain by induction
\begin{equation}
\begin{split}
   \partial_t^k b_\epsilon(t) &= \sum_{\ell=0}^{k-1} (-1)^{k-\ell-1} \big[b^{(k-\ell-1)}(1_{-0}) - b^{(k-\ell-1)}(1_{+0}) \big]  \partial_t^{\ell}\psi_\epsilon(t-1) \\
  &\quad + (-1)^{k} \bigg[\int_{-\infty}^{1}b^{(k)}(s)\psi_\epsilon(t-s)\d s + \int_{1}^{\infty}b^{(k)}(s)\psi_\epsilon(t-s)\d s \bigg]. 
\end{split}
\end{equation}
Again, the remaining integrals can be estimated by uniform bounds on the derivatives of $b$ outside the singularity,
and the statement follows from
\begin{equation}
\begin{split}
    | \partial_t^k b_\epsilon(t)  | &\le \sum_{\ell=0}^{k-1}  |b^{(k-\ell-1)}(1_{-0}) - b^{(k-\ell-1)}(1_{+0}) |\,
     |\partial_t^\ell \psi_\epsilon (t) |  + \sup_{s\in[- K,1)\cup(1,1+ K]} |b^{(k)}(s)|\\
     &\le C_k \big( \Phi_\epsilon(t)+1 \big)^k
\end{split}
\end{equation}
by the estimate $|\partial_t^k\psi(t)|\le C \Phi^k(t)$.
Finally, the estimate for derivatives of $\mathfrak d_\epsilon(t)$ follow from applying the quotient rule and using the uniform lower bound $b_0\le b_\epsilon(t)$ when estimating the denominator.
\end{proof}

\begin{lem}\label{lem:b_epsilon-b_0}
The estimates
\begin{equation}
\vert \partial_t^k b_{\epsilon}(t)-\partial_t^k b(t)\vert \lesssim \epsilon \label{Estimate b_epsilon-b_0}
\end{equation}
hold for all $k\ge0$ uniformly in $\epsilon\in(0,1]$ and $t$ satisfying $\vert t-1\vert > \epsilon K$.
\end{lem}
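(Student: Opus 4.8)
The plan is to exploit that for $|t-1|>\epsilon K$ the support $[t-\epsilon K,\,t+\epsilon K]$ of $\psi_\epsilon(t-\cdot)$ lies entirely on one side of the singularity, so that on this interval $b$ coincides with one of the two smooth extensions provided by assumption \textbf{(H2)}. On such a region the convolution may be differentiated classically, $\partial_t^k b_\epsilon(t)=\int_{-K}^{K} b^{(k)}(t-\epsilon s)\,\psi(s)\,\d s$, with $b^{(k)}$ understood as the appropriate one-sided $\mathrm C^\infty_b$-extension, and these derivatives are bounded uniformly in $t$ by \textbf{(H2)}.

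First I would fix $k$ and take $t>1+\epsilon K$ (the case $t<1-\epsilon K$ being completely symmetric) and write, using $\int\psi=1$,
\begin{equation}
  \partial_t^k b_\epsilon(t)-\partial_t^k b(t)=\int_{-K}^{K}\big(b^{(k)}(t-\epsilon s)-b^{(k)}(t)\big)\,\psi(s)\,\d s .
\end{equation}
Then I would bound the integrand by the mean value theorem, $|b^{(k)}(t-\epsilon s)-b^{(k)}(t)|\le \epsilon|s|\,\sup_{[1,+\infty)}|b^{(k+1)}|$, observing that the whole segment between $t-\epsilon s$ and $t$ stays in $[1,+\infty)$ because $t-\epsilon|s|\ge t-\epsilon K>1$. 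Since $\psi\ge0$ is supported in $[-K,K]$, integration yields $|\partial_t^k b_\epsilon(t)-\partial_t^k b(t)|\le \epsilon K\,\sup_{[1,+\infty)}|b^{(k+1)}|$, which is $\lesssim\epsilon$ with a constant depending only on $k$ and on the uniform derivative bounds in \textbf{(H2)}.

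The only point requiring attention — and it is bookkeeping rather than a genuine obstacle — is to be sure one always uses the correct smooth extension on the support of the mollifier and never differentiates across the jump; this is automatic once the hypothesis $|t-1|>\epsilon K$ is imposed, which is precisely why it appears in the statement. One could in fact sharpen the estimate to $O(\epsilon^2)$ by using the symmetry $\psi(-s)=\psi(s)$ to cancel the first-order Taylor term, but the stated $O(\epsilon)$ bound is all that is needed later, so I would not pursue the refinement here.
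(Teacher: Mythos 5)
Your proof is correct and takes essentially the same route as the paper's: both exploit that for $|t-1|>\epsilon K$ the mollifier's support avoids the jump, write the difference as an integral against $\psi$ using $\int\psi=1$, pass the derivatives onto $b$, and apply the mean value theorem on the relevant one-sided smooth piece. The change of variables you use (integrating in $s$ over $[-K,K]$ rather than in the original variable over $[t-\epsilon K,t+\epsilon K]$) is cosmetic, and the remark about an $O(\epsilon^2)$ refinement via symmetry is a nice extra observation not pursued in the paper.
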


\begin{proof}
Let $\vert t-1\vert\ > \epsilon K$. For $k=0$ we have
\begin{equation}
    b_\epsilon(t) - b(t) = \int_{-\infty}^\infty b(s) \psi_\epsilon(t-s)\d s - b(t) = 
    \int_{\left[t-\epsilon K,t+\epsilon K\right]} \left(b(s) - b(t) \right) \psi_\epsilon(t-s)\d s
\end{equation}
seeing that $\int\psi(s)\d s=1$ and that $\supp\psi_\epsilon=[-\epsilon K,\epsilon K]$. Hence,
\begin{equation}
\vert b_{\epsilon}(t)-b(t)\vert  \leq \int_{\left[t-\epsilon K,t+\epsilon K\right]}\vert b(s)-b(t)\vert\psi_{\epsilon}(t-s)
\d s.
\end{equation}
 As the range of integration does not contain $1$ we can use the differentiability of $b$ to estimate 
\begin{equation}
\vert b(s)-b(t)\vert \leq |s-t| \, \sup_{\theta \in [t-\epsilon K,t+\epsilon K]} |b'(\theta)| = M\vert s-t\vert
\end{equation}
for $s\in \left[t-\epsilon K,t+\epsilon K\right]$. Therefore
\begin{align}
\vert b_{\epsilon}(t)-b(t)\vert 
 \leq  \epsilon M K.
\end{align}
For $k\ge 1$ the argumentation is similar using
\begin{align}
\partial_t^k b_{\epsilon}(t)-\partial_t^k b(t) = \int_{\left[t-\epsilon K,t+\epsilon K\right]}\left( \partial_s^k b(s)-\partial_t^k b(t)\right) \psi_{\epsilon}(t-s)\d s
\end{align}
together with the corresponding bound on the derivatives of $b$ on the interval of integration.
\end{proof}

These two technical lemmas are the model behaviours for our symbol classes and the key estimates for the boundary behaviour at regular faces of the zones.

\subsection{Treatment in the hyperbolic zone}
\subsubsection{Symbol classes and their properties} For the treatment within the hyperbolic zone, symbol classes and their basic calculus properties are used. 

\begin{defn}[Symbol classes]
Let $N>0$ be fixed and $\Phi$ as in Section~\ref{sec:Notation}.
\begin{enumerate}
\item[\bf (i)] We say that a function 
\begin{equation}a \in C^{\infty}([0,2]\times\mathbb{R}^{n}\times(0,1])\end{equation} 
belongs to the hyperbolic symbol class $\mathcal S_{N,\Phi}\lbrace m_1,m_2\rbrace$ if it satisfies the estimates
\begin{equation}
\vert \partial_t^k \partial_{\xi}^{\alpha} a(t,\xi,\epsilon)\vert \leq C_{k,\alpha}\big(\Phi_{\epsilon }(t-1)+1\big)^{m_2+k}|\xi|^{m_1-\vert\alpha\vert}
\end{equation}
uniformly within $Z_{\rm hyp}(N)$ for all non-negative integers $k\in\mathbb N_0$ and all multi-indices $\alpha \in \mathbb{N}_0^n$
 together with the existence of the limits 
\begin{equation} a(t,\xi,0)=\lim_{\epsilon\to0} a(t,\xi,\epsilon),\qquad t\ne1, \end{equation}
at the regular face of the zone satisfying the estimates
\begin{align}
   |\xi|^{|\alpha|-m_1}    | \partial_t^k \partial_\xi^\alpha a(t,\xi,0) | &\le C_{k,\alpha}',  \label{eq:symb_RegF}  \\
   |\xi|^{|\alpha|-m_1}     | \partial_t^k\partial_\xi^\alpha \big( a(t,\xi,\epsilon) - a(t,\xi,0) \big) | & \le C_{k,\alpha}'' \epsilon
    \label{eq:limit_RegF}
\end{align}
with the latter one uniformly on $|t-1|\ge \epsilon K$.

\item[\bf (ii)] We say that a matrix-valued function $A$ belongs to $\mathcal S_{N,\Phi}\lbrace m_1,m_2\rbrace$ if all its entries belongs to the scalar-valued symbol class $\mathcal S_{N,\Phi}\lbrace m_1,m_2\rbrace$. 
\end{enumerate}
\end{defn}

\begin{exam}
Due to Lemma~\ref{lem:B_epsilon}, we know that the regularising families $b_\epsilon$ and $\mathfrak d_\epsilon$ satisfy 
\begin{equation}  
(b_\epsilon) \in \mathcal S_{N,\Phi}\{0,0\}\qquad \text{and}\qquad 
(\mathfrak d_{\epsilon})\in \mathcal S_{N,\Phi} \left\lbrace0,1\right\rbrace
\end{equation}
for any zone constant $N>0$. Similarly $|\xi|$ is a symbol from $\mathcal S_{N,\Phi} \{1,0\}$ for any admissible $\Phi$ and $N>0$.  
\end{exam}

\begin{rem}
The boundary behaviour of symbols given by \eqref{eq:symb_RegF} corresponds to a characterisation of symbol classes defined on the regular face $\{(t,\xi,0) \mid |\xi|\ge N\}$ of $Z_{\rm hyp}(N)$ with symbol estimates uniform with respect to $t$.
\end{rem}

Increasing the zone constant $N$ makes the hyperbolic zone $Z_{\rm hyp}(N)$ smaller and thus the symbol class $\mathcal S_{N,\Phi}\{m_1,m_2\}$ larger. We will make use of this fact later by choosing $N$ sufficiently large in order to guarantee the smallness of some terms. We will omit the indices $N$ and $\Phi$ to simplify notation.

\begin{prop}[Properties of symbol classes]\label{prop:Symbolic properties} For any fixed $N>0$ and admissible $\Phi$ the following statements hold:
\begin{enumerate}
\item[\bf(1)] $\mathcal S\lbrace m_1,m_2\rbrace$ is a vector space.
\item[\bf(2)] $\mathcal S\lbrace m_1,m_2\rbrace\subset \mathcal S\lbrace m_1+\ell_1,m_2-\ell_2 \rbrace$ for all $\ell_1 \geq \ell_2 \geq0$. 
\item[\bf(3)] If $f \in \mathcal S\lbrace m_1,m_2\rbrace$ and $g \in\mathcal S\lbrace m_1^\prime,m_2^\prime\rbrace$ then $f\cdot g \in \mathcal S\lbrace m_1+m_1^\prime,m_2+m_2^\prime\rbrace$.
\item[\bf(4)] If $f \in\mathcal S\lbrace m_1,m_2\rbrace$ then $\partial_t^k f \in\mathcal S\lbrace m_1,m_2+k\rbrace$ and $\partial_{\xi}^{\alpha} f \in \mathcal S \lbrace m_1-\vert\alpha\vert,m_2 \rbrace$.
\item[\bf(5)] If $f \in\mathcal S\lbrace m_1,0\rbrace$ satisfies $\vert f(t,\xi,\epsilon)\vert\textgreater c|\xi|^{m_1}$ for a positive constant $c$, then one has ${1}/{f}\in\mathcal S\lbrace -m_1,0\rbrace$. 
\end{enumerate}
\end{prop}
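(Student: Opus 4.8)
The plan is to prove all five items by going back to the three conditions entering the definition of $\mathcal{S}_{N,\Phi}\{m_1,m_2\}$ and checking each separately: the interior symbol estimates on $Z_{\rm hyp}(N)$, the existence of the regular-face limit $a(t,\xi,0)$ with the bound \eqref{eq:symb_RegF}, and the approximation estimate \eqref{eq:limit_RegF} valid on $|t-1|\ge\epsilon K$. Item \textbf{(1)} is then immediate, since all three conditions are linear and the limit of a linear combination is the corresponding linear combination of the limits. Item \textbf{(4)} is equally direct: applying $\partial_t^k\partial_\xi^\alpha$ to $\partial_t^{k_0}\partial_\xi^{\alpha_0}f$ merely shifts the differentiation multi-indices, so the interior bound picks up the exponents $m_2+k_0$ in $\Phi_\epsilon(t-1)+1$ and $m_1-|\alpha_0|$ in $|\xi|$, and likewise on the regular face. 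The only point worth a sentence is that $\lim_{\epsilon\to0}\partial_t^{k_0}\partial_\xi^{\alpha_0}f(t,\xi,\epsilon)$ equals $\partial_t^{k_0}\partial_\xi^{\alpha_0}$ of the limit; this follows from \eqref{eq:limit_RegF}, which forces uniform convergence of all $t$- and $\xi$-derivatives on compact subsets of $\{t\ne1\}$ and hence permits differentiation under the limit.

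For \textbf{(2)} the key elementary observation is that on $Z_{\rm hyp}(N)$ one has $1\le \Phi_\epsilon(t-1)+1\le |\xi|/N$ and $|\xi|\ge N$; thus a factor $(\Phi_\epsilon(t-1)+1)^{\ell_2}$ may be absorbed into $N^{-\ell_2}|\xi|^{\ell_2}$, producing the exponent $m_2-\ell_2+k$ in $\Phi_\epsilon(t-1)+1$ and $m_1+\ell_2-|\alpha|$ in $|\xi|$, and the surplus frequency power $|\xi|^{\ell_2-\ell_1}$ is bounded by the constant $N^{\ell_2-\ell_1}$ because $\ell_2\le\ell_1$. The same computation, using only $|\xi|\ge N$, handles the regular-face conditions \eqref{eq:symb_RegF} and \eqref{eq:limit_RegF}. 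Item \textbf{(3)} is the Leibniz rule: expanding $\partial_t^k\partial_\xi^\alpha(fg)$ into a finite sum of terms $(\partial_t^{k'}\partial_\xi^{\alpha'}f)(\partial_t^{k-k'}\partial_\xi^{\alpha-\alpha'}g)$ and multiplying the two symbol bounds yields exactly $(\Phi_\epsilon(t-1)+1)^{(m_2+m_2')+k}|\xi|^{(m_1+m_1')-|\alpha|}$, using $|\alpha'|+|\alpha-\alpha'|=|\alpha|$. On the regular face one sets $(fg)(t,\xi,0):=f(t,\xi,0)\,g(t,\xi,0)$, which is genuinely the limit of the product; \eqref{eq:symb_RegF} follows from the same product estimate, and \eqref{eq:limit_RegF} from the telescoping identity $fg-f_0g_0=(f-f_0)g+f_0(g-g_0)$ differentiated by Leibniz, each summand carrying one $\mathcal{O}(\epsilon)$ factor from \eqref{eq:limit_RegF} and bounded factors from \eqref{eq:symb_RegF} and the interior estimates.

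Item \textbf{(5)} requires the combinatorial structure of higher derivatives of a reciprocal: $\partial_t^k\partial_\xi^\alpha(1/f)$ is a finite linear combination of terms $f^{-(j+1)}\prod_{i=1}^{j}\partial_t^{k_i}\partial_\xi^{\alpha_i}f$ with $\sum_i k_i=k$, $\sum_i\alpha_i=\alpha$, each differentiation multi-index of order at least one, and $1\le j\le k+|\alpha|$ (for $k=|\alpha|=0$ the single term is $1/f$). The hypothesis $|f(t,\xi,\epsilon)|>c|\xi|^{m_1}$ gives $|f^{-(j+1)}|<c^{-(j+1)}|\xi|^{-(j+1)m_1}$, and combining this with the $j$ interior bounds for the derivative factors dominates each term by a constant times $(\Phi_\epsilon(t-1)+1)^{k}|\xi|^{-m_1-|\alpha|}$, since the frequency exponents add to $-(j+1)m_1+jm_1=-m_1$ and the $\Phi_\epsilon$-exponents to $0$; this is precisely the $\mathcal{S}\{-m_1,0\}$ estimate. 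On the regular face $f(t,\xi,0)$ is the limit of $f(t,\xi,\epsilon)$ and satisfies $|f(t,\xi,0)|\ge c|\xi|^{m_1}>0$ because $|\xi|\ge N$, so $1/f$ extends continuously with limit $1/f(t,\xi,0)$; the same combinatorial formula with \eqref{eq:symb_RegF} gives \eqref{eq:symb_RegF} for $1/f$, and \eqref{eq:limit_RegF} follows from $1/f-1/f_0=(f_0-f)/(f f_0)$, differentiated by the product and reciprocal rules, using the lower bounds on $|f|$ and $|f_0|$ together with the $\mathcal{O}(\epsilon)$ bound for $f-f_0$.

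The bulk of the argument is a mechanical combination of the symbol estimates with the Leibniz rule and the inequalities $1\le \Phi_\epsilon(t-1)+1\le|\xi|/N$ on $Z_{\rm hyp}(N)$; the only genuinely delicate point is the bookkeeping at the regular face, where one must propagate the limiting symbol and its $\mathcal{O}(\epsilon)$ remainder simultaneously through products and reciprocals — exactly the mechanism already visible in Lemmas~\ref{lem:B_epsilon} and~\ref{lem:b_epsilon-b_0} — and justify that the regular-face limit of a derivative is the derivative of the regular-face limit. I expect this step, rather than any of the interior estimates, to be the main obstacle to a fully careful write-up.
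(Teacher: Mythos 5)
Your proposal is correct and follows essentially the same route as the paper: (1) and (4) directly from linearity and index-shifting, (2) from the zone inequality $\Phi_\epsilon(t-1)+1\le|\xi|/N$ together with $|\xi|\ge N$, (3) from Leibniz plus the telescoping identity $fg-f_0g_0=(f-f_0)g+f_0(g-g_0)$, and (5) from the Fa\`a di Bruno--type expansion of $\partial_t^k\partial_\xi^\alpha(1/f)$. If anything you are slightly more careful than the published proof at the regular face, in particular spelling out why the limit $a(\cdot,\cdot,0)$ commutes with $\partial_t^k\partial_\xi^\alpha$ and sketching the boundary estimate for (5), both of which the paper leaves implicit.
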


\begin{proof}
Properties (1) and (4) follow immediately from the definition of the symbol classes. For (3) we apply the product rule for derivatives to derive the symbol estimate \eqref{eq:symb_RegF}. The boundary behaviour \eqref{eq:limit_RegF} follows by writing the product of two symbols $f \in \mathcal S\{m_1,m_2\}$ and $g\in\mathcal S\{m_1',m_2'\}$ as 
\begin{equation}
     f(t,\xi,\epsilon) g(t,\xi,\epsilon) - f(t,\xi,0) g(t,\xi,0) 
    =    \big( f(t,\xi,\epsilon) - f(t,\xi,0)\big) g(t,\xi,\epsilon) + f(t,\xi,0)\big( g(t,\xi,\epsilon) - g(t,\xi,0)\big),
\end{equation}
applying the product rule for derivatives 
\begin{equation}
\begin{split}
 \partial_t^k\partial_\xi^\alpha \big(    f(t,\xi,\epsilon) g(t,\xi,\epsilon) - f(t,\xi,0) g(t,\xi,0)  \big) 
 &= \sum_{\substack{\ell\le k\\ \beta\le \alpha}} \binom{k}{\ell}  \binom{\alpha}{\beta}   \big( \partial_t^{k-\ell} \partial_\xi^{\alpha-\beta} \big( f(t,\xi,\epsilon) - f(t,\xi,0)\big) \big)\, \big(\partial_t^\ell \partial_\alpha^\beta g(t,\xi,\epsilon) \big)\\&\qquad\quad
 +\binom{k}{\ell}  \binom{\alpha}{\beta}  \big( \partial_t^{k-\ell} \partial_\xi^{\alpha-\beta} f(t,\xi,0)\big)\, \big(\partial_t^\ell \partial_\alpha^\beta\big( g(t,\xi,\epsilon) - g(t,\xi,0)\big)\big),
 \end{split}
\end{equation}
and estimating each resulted differences on the right by \eqref{eq:limit_RegF} and all the remaining factors by \eqref{eq:symb_RegF}.
To prove (2) we use the definition of the hyperbolic zone $Z_{\rm hyp}(N)$ in the form
\begin{equation}
    \big(\Phi_\epsilon(t-1)+1\big)^{-\ell_2}|\xi|^{\ell_1} \ge N^{\ell_1} \big(\Phi_\epsilon(t-1)+1\big)^{\ell_1-\ell_2} \ge N^{\ell_1}
\end{equation}
and conclude that symbol estimates from $\mathcal S\{m_1,m_2\}$ imply symbol estimates from $\mathcal S\{m_1+\ell_1,m_2-\ell_2\}$. It remains to prove (5). Here we use Fa\`a di Bruno's formula (see \cite{MHFaa}, \cite{DLT}, \cite{DLTerr}) and write
\begin{equation}
\partial_{t}^{k}\partial_{\xi}^{\alpha} \frac{1}{f(t,\xi,\epsilon)}
=\sum_{\ell=1}^{k+\vert\alpha\vert}\sum_{\substack{j_{1}+\cdots+j_{\ell}=k \\ 
\vert\alpha_{1}\vert+\cdots+\vert\alpha_{\ell}\vert=\vert\alpha\vert}
}
C_{k,\alpha,j_{i},\alpha_{i}}\frac{\partial_{t}^{j_1}\partial_{\xi}^{\alpha_1}f(t,\xi,\epsilon)\cdots \partial_{t}^{j_\ell}\partial_{\xi}^{\alpha_\ell}f(t,\xi,\epsilon)}{f^{\ell+1}(t,\xi,\epsilon)}
\end{equation}
where $C_{k,\alpha,j_{i},\alpha_{i}}$ are constants depending on the order of the derivatives. Each term in the last sum can be estimated in the following way.
As $f \in S\lbrace m_1,0\rbrace$ property (4) implies  for $i=1,\dots,\ell$ that
\begin{equation}
\vert \partial_{t}^{j_{i}}\partial_{\xi}^{\alpha_{i}}f\vert \leq C_{j_{i},\alpha_{i}}\big(\Phi_{\epsilon }(t-1)+1\big)^{j_{i}}|\xi|^{m_1+\vert\alpha_{i}\vert}.
\end{equation}
Therefore,
\begin{equation}
\vert \partial_{t}^{j_{1}}\partial_{\xi}^{\alpha_{1}}f\cdots \partial_{t}^{j_{\ell}}\partial_{\xi}^{\alpha_{\ell}}f\vert \lesssim 
\big(\Phi_{\epsilon }(t-1)+1\big)^{j_{1}+\cdots+j_{\ell}}|\xi|^{\ell m_{1}+\vert\alpha_{1}\vert+\dots+\vert\alpha_{\ell}\vert}
\end{equation}
and using the condition $\vert f(t,\xi,\epsilon)\vert\textgreater c|\xi|^{m_1}$ we obtain
\begin{equation}
\begin{split}
\bigg|\frac{\partial_{t}^{j_{1}}\partial_{\xi}^{\alpha_{1}}f\cdots\partial_{t}^{j_{\ell}}\partial_{\xi}^{\alpha_{\ell}}f}{f^{\ell+1}}\bigg| &\lesssim \frac{\big(\Phi_{\epsilon }(t-1)+1\big)^{j_{1}+\cdots+j_{\ell}}|\xi|^{\ell m_1+\vert\alpha_{1}\vert+\cdots+\vert\alpha_{\ell}\vert}}{|\xi|^{m_{1}(\ell+1)}}
\lesssim \frac{\big(\Phi_{\epsilon }(t-1)+1\big)^{k}|\xi|^{\ell m_1+\vert\alpha\vert}}{|\xi|^{m_{1}(\ell+1)}}\\
&\lesssim \big(\Phi_\epsilon(t-1)+1\big)^{k}|\xi|^{-m_1+\vert\alpha\vert}.
\end{split}
\end{equation}
Summing all these terms yields the desired estimate. The boundary estimate follows similarly.
\end{proof}

These symbol classes and in particular the embeddings
\begin{equation}\label{eq:hyp-emb}
  \mathcal S\{-1,2\} \hookrightarrow \mathcal S\{0,1\} \hookrightarrow \mathcal S\{1,0\}
\end{equation}
will be of importance for the treatment within the hyperbolic zone. The gain of decay in $|\xi|$ will be paid for by a loss of point-wise control in the $t$-variable near the singularity. What we gain, are integrability properties and improved limits at the regular face.

\begin{prop}\label{prop:Symb. from}
Within the hyperbolic zone $Z_{\rm hyp}(N)$,
\begin{enumerate}
\item[\bf(1)] symbols from $\mathcal S\left\lbrace0,0\right\rbrace$ are uniformly bounded;
\item[\bf(2)] symbols from $\mathcal S\left\lbrace0,1\right\rbrace$  are uniformly integrable with respect to $t$;
\item[\bf(3)] symbols $a\in \mathcal S\left\lbrace-1,2\right\rbrace$ satisfy
\begin{equation}
    \int_0^t |a(\theta,\xi,\epsilon)| \d \theta \le C |\xi|^{-1}\big(\Phi_{\epsilon }(t-1)+1\big)
\end{equation}
for all $0<t\le t_{\xi_1}$, and 
\begin{equation}
    \int_t^2 |a(\theta,\xi,\epsilon)| \d \theta \le C |\xi|^{-1}\big(\Phi_{\epsilon }(t-1)+1\big)
\end{equation}
for all $t_{\xi_2} \le t\le 2$.
\end{enumerate}
\end{prop}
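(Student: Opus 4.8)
The plan is to reduce all three claims to elementary bounds on integrals of powers of $\Phi_\epsilon(t-1)+1$ over subintervals of $[0,2]$, the decisive input being the defining inequality \eqref{eq:PhiSquEst1} of the shape function and its behaviour under scaling. Statement \textbf{(1)} is immediate: for $a\in\mathcal S\{0,0\}$ the symbol estimate with $k=0$ and $\alpha=0$ reads $|a(t,\xi,\epsilon)|\le C_{0,0}$ throughout $Z_{\rm hyp}(N)$. For \textbf{(2)} I would first record that, since $\Phi$ is continuous with compact support, $\int_{\R}\Phi_\epsilon(s)\,\d s=\int_{\R}\Phi(s)\,\d s$ is finite and independent of $\epsilon$; for $a\in\mathcal S\{0,1\}$ the bound $|a(t,\xi,\epsilon)|\le C\big(\Phi_\epsilon(t-1)+1\big)$ holds on $Z_{\rm hyp}(N)$, and integrating it over the (at most two) subintervals of $[0,2]$ that lie in $Z_{\rm hyp}(N)$ gives
\begin{equation*}
  \int |a(\theta,\xi,\epsilon)|\,\d\theta \le C\Big(2+\int_{-1}^{1}\Phi_\epsilon(s)\,\d s\Big)\le C\Big(2+\int_{\R}\Phi(s)\,\d s\Big),
\end{equation*}
a bound uniform in $\xi$ and $\epsilon$.

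For \textbf{(3)} the key observation is that \eqref{eq:PhiSquEst1} survives rescaling: from $\tfrac{\d}{\d s}\Phi_\epsilon(s)=\epsilon^{-2}\Phi'(\epsilon^{-1}s)$ and $\Phi_\epsilon^2(s)=\epsilon^{-2}\Phi^2(\epsilon^{-1}s)$ one gets $\Phi_\epsilon^2(s)\lesssim\tfrac{\d}{\d s}\Phi_\epsilon(s)$ for $s<0$ and $\Phi_\epsilon^2(s)\lesssim-\tfrac{\d}{\d s}\Phi_\epsilon(s)$ for $s>0$, with constants independent of $\epsilon$; moreover \eqref{eq:PhiSquEst1} forces $\Phi'\ge 0$ on $(-\infty,0)$ and $\Phi'\le 0$ on $(0,\infty)$, so $\Phi_\epsilon$ is non-decreasing on $(-\infty,0)$ and non-increasing on $(0,\infty)$. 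Now fix $\xi,\epsilon$ for which $t_{\xi_1}$ is defined and take $0<t\le t_{\xi_1}<1$. For every $\theta\in[0,t]$ one has $\theta-1\le t_{\xi_1}-1<0$, so monotonicity of $\Phi_\epsilon$ and \eqref{eq:txi-def} give $N\big(\Phi_\epsilon(\theta-1)+1\big)\le N\big(\Phi_\epsilon(t_{\xi_1}-1)+1\big)=|\xi|$, i.e. $(\theta,\xi,\epsilon)\in Z_{\rm hyp}(N)$, so that $|a(\theta,\xi,\epsilon)|\le C\big(\Phi_\epsilon(\theta-1)+1\big)^2|\xi|^{-1}$ holds on the whole interval of integration. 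Expanding $\big(\Phi_\epsilon(\theta-1)+1\big)^2=\Phi_\epsilon^2(\theta-1)+2\Phi_\epsilon(\theta-1)+1$ and integrating over $[0,t]$, the last two terms contribute at most $2\int_{\R}\Phi(s)\,\d s+2$, while the leading term is controlled by
\begin{equation*}
  \int_0^t\Phi_\epsilon^2(\theta-1)\,\d\theta \lesssim \int_0^t\tfrac{\d}{\d\theta}\Phi_\epsilon(\theta-1)\,\d\theta = \Phi_\epsilon(t-1)-\Phi_\epsilon(-1)\le\Phi_\epsilon(t-1),
\end{equation*}
whence $\int_0^t|a(\theta,\xi,\epsilon)|\,\d\theta\le C|\xi|^{-1}\big(\Phi_\epsilon(t-1)+1\big)$. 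The second estimate in \textbf{(3)} follows in the same way: for $t_{\xi_2}\le t\le 2$ and $\theta\in[t,2]$ one has $\theta-1\ge t_{\xi_2}-1>0$, so monotonicity of $\Phi_\epsilon$ on $(0,\infty)$ together with \eqref{eq:txi-def} again places $(\theta,\xi,\epsilon)$ in $Z_{\rm hyp}(N)$, and now $\int_t^2\Phi_\epsilon^2(\theta-1)\,\d\theta\lesssim-\int_t^2\tfrac{\d}{\d\theta}\Phi_\epsilon(\theta-1)\,\d\theta=\Phi_\epsilon(t-1)-\Phi_\epsilon(1)\le\Phi_\epsilon(t-1)$.

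I expect the only delicate point to be the bookkeeping of the sign of $\theta-1$ together with the verification that the entire interval of integration stays inside $Z_{\rm hyp}(N)$ — so that the pointwise symbol estimates are legitimate there — which is exactly where the one-sided monotonicity of $\Phi_\epsilon$ (a consequence of \eqref{eq:PhiSquEst1}) and the definition \eqref{eq:txi-def} of the zone boundary enter. This also explains why the restriction to $t\le t_{\xi_1}$, resp. $t\ge t_{\xi_2}$, cannot be dropped: for $t$ beyond the singular time the quantity $\int\Phi_\epsilon^2$ would pick up the value $\Phi_\epsilon(0)\sim\epsilon^{-1}$, which is not controlled by $\Phi_\epsilon(t-1)+1$. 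Everything else is a routine use of the substitution $u=\theta-1$, the fundamental theorem of calculus on the intervals avoiding $\theta=1$, and the definitions of the symbol classes.
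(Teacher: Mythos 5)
Your proof is correct and follows essentially the same route as the paper's: reduce to pointwise bounds in powers of $\Phi_\epsilon(t-1)+1$, use that $\int\Phi_\epsilon$ is $\epsilon$-independent for (2), and exploit \eqref{eq:PhiSquEst1} via the fundamental theorem of calculus to absorb the $\Phi_\epsilon^2$ term in (3). Your additional observations — that \eqref{eq:PhiSquEst1} is scale-invariant, that it forces the one-sided monotonicity of $\Phi_\epsilon$, and that this monotonicity together with \eqref{eq:txi-def} guarantees the whole interval of integration lies in $Z_{\rm hyp}(N)$ — make explicit bookkeeping that the paper leaves implicit, but do not change the argument.
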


\begin{proof}
Statement (1) is obvious from the definition of the symbol class. Next we prove (2). If $f \in \mathcal S\left\lbrace0,1\right\rbrace$ then it satisfies the point-wise estimate
\begin{equation}
\vert f(t,\xi,\epsilon)\vert \leq C\big(\Phi_{\epsilon }(t-1)+1\big),
\end{equation}
and therefore after integrating over $t\in[0,t_{\xi_1}(\epsilon)]$  (or similarly over $t\in [t_{\xi_2}(\epsilon),2]$) one has
\begin{equation}
\begin{split}
\int_0^{t_{\xi_1}}\vert f(t,\xi,\epsilon)\d s\vert \d t& \leq  C \int_0^{t_{\xi_1}} \Phi_{\epsilon}(t-1)\d t +C  \int_0^{t_{\xi_1}} \d t = C\int_{-\epsilon^{-1}}^{\epsilon^{-1}(t_{\xi_1}-1)}\Phi(\tau)\d \tau + C \le C \bigg[1+\int_{-\infty}^0 \Phi(\tau)\d \bigg]
\end{split}
\end{equation}
for any fixed $\epsilon\in(0,1]$ and $\xi\in\R^n$. It remains to prove (3). If $a\in S\left\lbrace-1,2\right\rbrace$ then it satisfies the point-wise estimate
\begin{equation}
\vert a(t,\xi,\epsilon)\vert \leq C|\xi|^{-1} \big(\Phi_\epsilon(t-1)+1\big)^2
\end{equation}
and the only new term needing to be treated is the one arising from the square of the shape function. This can be estimated by means of 
\eqref{eq:PhiSquEst1} for $t<1$ as
\begin{equation}
\begin{split}
 \int_0^t \Phi_\epsilon^2(\theta) \d\theta &= \epsilon^{-1}  \int_{-\epsilon^{-1}}^{\epsilon^{-1} (t-1) }  \Phi^2(\tau) \d\tau 
 \le C  \epsilon^{-1}  \int_{-\epsilon^{-1}}^{\epsilon^{-1} (t-1) }  \Phi'(\tau) \d\tau 
 \le C \epsilon^{-1} \Phi(\epsilon^{-1} (t-1) ) - C \epsilon^{-1} \Phi(-\epsilon^{-1}) \le C \Phi_\epsilon(t-1)  
\end{split}
\end{equation}
and similarly for the case $t>1$.
\end{proof}

\subsubsection{Transformations}
Within the hyperbolic zone, we apply transformations to our system in order to extract precise information about the behaviour of its fundamental solution. Recall that \eqref{Transf to system} is of the form
$\mathrm D_t U = (A + B) U$ with $A\in\mathcal S\{1,0\}$ and $B\in\mathcal S\{0,1\}$.
Using the diagonaliser of the principal part $A$
\begin{equation}\label{eq:Mdef}
M=\frac{1}{\sqrt{2}}\begin{pmatrix}
1 & -1 \\ 
1 & 1
\end{pmatrix}\qquad \text{  with  inverse  }\qquad
M^{-1}=\frac{1}{\sqrt{2}}\begin{pmatrix}
1 & 1 \\ 
-1 & 1
\end{pmatrix},
\end{equation}
the matrix $A$ can be written as
\begin{equation}
A(\xi)=M  D(\xi) M^{-1}
\end{equation}
with $D(\xi) = \diag(|\xi|,-|\xi|)$. Hence, setting $V(t,\xi,\epsilon)=M^{-1}U(t,\xi,\epsilon)$ system \eqref{Transf to system} can be rewritten as
\begin{equation}
\D_{t}V(t,\xi,\epsilon )=\big( D(\xi)+R(t,\epsilon) \big) V(t,\xi,\epsilon ) \label{Syst after 1 diag}
\end{equation}
with a remainder given by
\begin{equation}
R(t,\epsilon)=M^{-1}B(t,\epsilon)M=\frac{\i}{2}\mathfrak d_{\epsilon}(t)\begin{pmatrix} 
1 & 1 \\ 
1 & 1
\end{pmatrix}
\in\mathcal S\{0,1\}.
\end{equation}
Our aim is to further improve the remainder within the hyperbolic hierarchy \eqref{eq:hyp-emb}. This allows to extract more detailed information on the propagation of singularities close to the singularity later. For this we follow \cite{RW14} to construct 
 transformation matrices $N_{k}(t,\xi,\epsilon)$, transforming the system (\ref{Syst after 1 diag}) into a new system with an updated diagonal part and an improved remainder. The construction is done in such a way, that the operator identity
\begin{equation}
\big(\D_{t}-D(\xi )-R(t,\epsilon )\big)N_{k}(t,\xi,\epsilon)=N_{k}(t,\xi,\epsilon)\left(D_{t}-D_k(t,\xi,\epsilon)-R_{k}(t,\xi,\epsilon)\right) \label{Identity}
\end{equation}
holds for $k\ge1$ and
\begin{enumerate}
\item[\bf(1)] the matrix-valued symbols $D_k(t,\xi,\epsilon)$ are given by
\begin{equation} D_k(t,\xi,\epsilon) = D(\xi) + F^{(0)}(t,\xi,\epsilon)+\cdots +F^{(k-1)}(t,\xi,\epsilon) \end{equation}
with diagonal $F^{(j)}(t,\xi,\epsilon) \in S\{-j,j+1\}$;
\item[\bf(2)] the transformation matrices $N_k(t,\xi,\epsilon)$ are of the form
\begin{equation}
   N_k(t,\xi,\epsilon)  = \mathrm I + N^{(1)}(t,\xi,\epsilon) + \cdots + N^{(k)}(t,\xi,\epsilon) 
\end{equation}
with $N^{(j)}(t,\xi,\epsilon) \in S\{-j,j\}$;
\item[\bf(3)] the remainder satisfies $R_k(t,\xi,\epsilon) \in S\{-k,k+1\}$. 
\end{enumerate}
We give the construction for $k=1$ in full detail. In this case \eqref{Identity} simplifies modulo $S\{-1,2\}$ to 
the commutator equation
\begin{equation}
\big[D(\xi),N^{(1)}(t,\xi,\epsilon)\big]
=F^{(0)}(t,\xi,\epsilon)-R(t,\epsilon).  \label{Commutator}
\end{equation}
As the diagonal part of the commutator vanishes, we set 
\begin{equation}\label{eq:4.34}
F^{(0)}(t,\epsilon)=\diag R(t,\epsilon)=\frac{\i}{2}\mathfrak d_{\epsilon}(t)
\begin{pmatrix} 1 & 0\\ 
0 & 1
\end{pmatrix} \in S\{0,1\},
\end{equation}
and determine the off-diagonal entries of 
\begin{equation}
N^{(1)}(t,\xi,\epsilon) = \begin{pmatrix}
n_{11} & n_{12}\\ 
n_{21} & n_{22}
\end{pmatrix}
\end{equation}
from (\ref{Commutator}) as $n_{12}=-\frac{\i}{4|\xi|}\mathfrak d_{\epsilon}(t)$ and $n_{21}=\frac{\i}{4|\xi|}\mathfrak d_{\epsilon}(t)$. The diagonal entries are chosen to be zero, and hence
\begin{equation}\label{eq:N1}
N^{(1)}(t,\xi,\epsilon)=\frac{\i}{4|\xi|}\mathfrak d_{\epsilon}(t)\begin{pmatrix}
0 & -1\\
1 & 0
\end{pmatrix}\in S\left\{-1,1\right\}.
\end{equation}
The transformation matrix $N_{1}(t,\xi,\epsilon)=\mathrm I+N^{(1)}(t,\xi,\epsilon)$ is invertible, provided that the zone constant is chosen large enough.

\begin{prop}\label{prop:Diagonalisation}
Assume Hypotheses {\bf (H1)} and {\bf (H2)} hold. Then, there exists a matrix $N^{(1)}(t,\xi,\epsilon)\in S\left\{-1,1\right\}$ and a diagonal matrix $F^{(0)}(t,\epsilon )\in S\left\{0,1\right\}$, such that the identity (\ref{Identity}) is satisfied with a remainder $R_{1}(t,\xi,\epsilon)\in
S\left\{-1,2\right\}$. Moreover, we can find a zone constant $N$, such that the transformation matrix $N_{1}(t,\xi,\epsilon)=I+N^{(1)}(t,\xi,\epsilon)$ is invertible in $Z_{\rm hyp}(N)$ and $N_{1}^{-1}(t,\xi,\epsilon)\in S\left\{0,0\right\}$.
\end{prop}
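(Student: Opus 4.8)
The plan is to verify the three claimed properties in the order in which they were derived in the construction, and then to turn the formal construction into rigorous statements about the symbol classes. First I would record that the explicit formulas \eqref{eq:4.34} and \eqref{eq:N1} already exhibit $F^{(0)}(t,\epsilon)$ and $N^{(1)}(t,\xi,\epsilon)$; so the first task is to check that they lie in the asserted classes. Since $\mathfrak d_\epsilon \in \mathcal S\{0,1\}$ by the Example following Lemma~\ref{lem:B_epsilon}, property (4) of Proposition~\ref{prop:Symbolic properties} gives $F^{(0)} = \frac{\i}{2}\mathfrak d_\epsilon \mathrm I \in \mathcal S\{0,1\}$ directly, and $|\xi|^{-1}\in\mathcal S\{-1,0\}$ by property (5) (applied to $f=|\xi|\in\mathcal S\{1,0\}$, which satisfies $|f|\geq |\xi|$ trivially), so property (3) yields $N^{(1)} = \frac{\i}{4}\,|\xi|^{-1}\mathfrak d_\epsilon\, \bigl(\begin{smallmatrix}0&-1\\1&0\end{smallmatrix}\bigr)\in\mathcal S\{-1,1\}$. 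The existence of the limit at the regular face and the estimates \eqref{eq:symb_RegF}, \eqref{eq:limit_RegF} for these two symbols are inherited from those for $\mathfrak d_\epsilon$ (Lemma~\ref{lem:b_epsilon-b_0} is the model behaviour) via the same product rules, so nothing new is needed there.

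Next I would establish the remainder estimate $R_1\in\mathcal S\{-1,2\}$. The point is that \eqref{Identity} for $k=1$, written out, reads
\begin{equation*}
\bigl(\D_t - D(\xi) - R\bigr)(\mathrm I + N^{(1)}) = (\mathrm I+N^{(1)})\bigl(\D_t - D(\xi) - F^{(0)} - R_1\bigr),
\end{equation*}
and solving for $R_1$ after expanding and cancelling the terms that vanish by the commutator equation \eqref{Commutator} gives an explicit expression of the form
\begin{equation*}
R_1 = (\mathrm I+N^{(1)})^{-1}\Bigl( (\D_t N^{(1)}) - R\,N^{(1)} + N^{(1)}F^{(0)} - N^{(1)}(\text{lower order}) \Bigr),
\end{equation*}
i.e. $R_1$ is built from the products $R\,N^{(1)}$, $N^{(1)}F^{(0)}$, and $\D_t N^{(1)}$, all multiplied by $(\mathrm I+N^{(1)})^{-1}$. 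By property (3) of Proposition~\ref{prop:Symbolic properties}, $R\,N^{(1)}\in\mathcal S\{0,1\}\cdot\mathcal S\{-1,1\}\subset\mathcal S\{-1,2\}$ and $N^{(1)}F^{(0)}\in\mathcal S\{-1,1\}\cdot\mathcal S\{0,1\}\subset\mathcal S\{-1,2\}$; by property (4), $\D_t N^{(1)} = -\i\,\partial_t N^{(1)}\in\mathcal S\{-1,2\}$. So modulo the invertibility of $\mathrm I+N^{(1)}$ and the claim that $(\mathrm I+N^{(1)})^{-1}\in\mathcal S\{0,0\}$, the remainder lands in $\mathcal S\{-1,2\}$.

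It therefore remains — and this is the main obstacle — to choose $N$ large enough that $N_1=\mathrm I+N^{(1)}$ is invertible in $Z_{\rm hyp}(N)$ with $N_1^{-1}\in\mathcal S\{0,0\}$. From \eqref{eq:N1}, $\|N^{(1)}(t,\xi,\epsilon)\| \le C|\xi|^{-1}|\mathfrak d_\epsilon(t)| \le C C_{2,0}\,|\xi|^{-1}\bigl(\Phi_\epsilon(t-1)+1\bigr)$ by Lemma~\ref{lem:B_epsilon}, while in $Z_{\rm hyp}(N)$ one has $|\xi|\ge N\bigl(\Phi_\epsilon(t-1)+1\bigr)$, hence $\|N^{(1)}\|\le C C_{2,0}/N$. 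Choosing $N$ so that this is $\le 1/2$ makes $\mathrm I+N^{(1)}$ invertible by the Neumann series, with $\|N_1^{-1}\|\le 2$ uniformly, giving the $k=0,\alpha=0$ bound for $\mathcal S\{0,0\}$; the higher-order symbol estimates and the regular-face estimates for $N_1^{-1}$ then follow from Fa\`a di Bruno / the Neumann series expansion exactly as in the proof of property (5) of Proposition~\ref{prop:Symbolic properties}, since each $\partial_t^k\partial_\xi^\alpha$ applied to the Neumann series produces finite sums of products of derivatives of $N^{(1)}$ (each in $\mathcal S\{-j,j\}$, $j\ge1$) sandwiched by copies of $N_1^{-1}$, and these products only improve the orders. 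The only subtlety to flag is that the constants $C_{2,k}$ in Lemma~\ref{lem:B_epsilon} do not depend on $N$, so a single choice of $N$ works for all the estimates simultaneously — this is what makes the argument self-consistent.
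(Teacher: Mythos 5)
Your proposal is correct and follows essentially the same path as the paper: verify the symbol classes of $F^{(0)}$ and $N^{(1)}$ from the explicit formulas, choose $N$ large to make $N_1=\mathrm I+N^{(1)}$ invertible with $N_1^{-1}\in\mathcal S\{0,0\}$, and read off $R_1\in\mathcal S\{-1,2\}$ from its explicit expression via the calculus rules. The only real variation is that you prove invertibility through the norm bound $\|N^{(1)}\|\le C/N<1$ and a Neumann series, whereas the paper computes $\det N_1=1-\mathfrak d_\epsilon^2(t)/(16|\xi|^2)$ and applies property (5); these are equivalent, and the paper itself invokes the Neumann series in the remark that follows to get the sharper $N_1^{-1}-\mathrm I\in\mathcal S\{-1,1\}$. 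Two small blemishes in your sketch: solving \eqref{Identity} actually gives $R_1=N_1^{-1}\bigl(R\,N^{(1)}-\D_t N^{(1)}-N^{(1)}F^{(0)}\bigr)$ (your version has the opposite signs), and the extra term ``$-N^{(1)}(\text{lower order})$'' you wrote does not appear once the commutator relation \eqref{Commutator} is used to cancel the remaining contributions; neither affects the symbol-class conclusion.
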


\begin{proof}
It remains to show the invertibility of $N_1(t,\xi,\epsilon)$. Indeed, by \eqref{eq:N1} it follows that
\begin{equation}
\det N_{1}=1-\frac{\mathfrak d_{\epsilon}^{2}(t)}{16|\xi|^2} 
\end{equation}
and by Lemma~\ref{lem:B_epsilon} one has
\begin{equation}
\frac{\mathfrak d_{\epsilon}^{2}(t)}{16|\xi|^2} \leq \frac{\big(c_{1}\Phi_{\epsilon}(t-1)+c_{2}\big)^{2}}{16|\xi|^2}.
\end{equation}
Hence, by choosing  the zone constant $N$ large enough such that
\begin{equation}
c_{1}\psi_{\epsilon}(t-1)+c_{2} \leq N\psi_{\epsilon}(t-1)+N,
\end{equation}
the invertibility follows. By the calculus rules of Proposition~\ref{prop:Symbolic properties}, we also conclude $N_1^{-1}\in S\{0,0\}$. 

The matrices $N^{(1)}(t,\xi,\epsilon) \in S\{-1,1\}$ and $F^{(0)}(t,\epsilon) \in S\{0,1\}$ are already constructed in such a way that \eqref{Identity} holds with the remainder
\begin{equation}\label{eq:4.40}
\begin{split}
   R_1(t,\xi,\epsilon)  = N_1^{-1}(t,\xi,\epsilon)\big( 
      R(t,\epsilon)N^{(1)}(t,\xi,\epsilon)   - \D_tN^{(1)} (t,\xi,\epsilon) - N^{(1)}(t,\xi,\epsilon) F^{(0)}(t,\epsilon) 
   \big) \in S\{-1,2\}
\end{split}
\end{equation}
and the statement is proved.
\end{proof}

\begin{rem}
Taking the limits $\epsilon\to0$ at the regular faces of $Z_{\rm hyp}(N)$ the diagonalisation procedure yields in particular the transformations needed to construct representations of solutions in the case of smooth coefficients. In particular 
the limit
\begin{equation}
N^{(1)}(t,\xi,0) = \lim_{\epsilon\to0} N^{(1)}(t,\xi,\epsilon)
\end{equation} 
exists for $t\ne1$ and $|\xi|>N$ and satisfies
\begin{equation}\label{eq:N1epsEst}
   \| N_1(t,\xi,\epsilon) - N_1(t,\xi,0) \| 
   = 
   \| N^{(1)}(t,\xi,\epsilon) - N^{(1)}(t,\xi,0) \| \le C \epsilon |\xi|^{-1},
\end{equation}
and as the inverse $N_1^{-1}(t,\xi,\epsilon)$ can be written as a Neumann series, we know that $N_1^{-1}(t,\xi,\epsilon)- \mathrm I \in \mathcal S\{-1,1\}$ and consequently 
\begin{equation}\label{eq:N1epsInvEst}
   \| N_1^{-1}(t,\xi,\epsilon)- N_1(t,\xi,0)^{-1} \| 
    \le C \epsilon |\xi|^{-1}.
\end{equation}
Similarly the limit $R_1(t,\xi,0) = \lim_{\epsilon\to0} R_1(t,\xi,\epsilon)$ satisfies
\begin{equation}\label{eq:R1epsEst}
   \| R_1(t,\xi,\epsilon) - R_1(t,\xi,0)\| \le C \epsilon |\xi|^{-1}. 
\end{equation}
This enables us to relate the construction of fundamental solutions for the regularised family to the fundamental solution of the original problem outside the singularity.
\end{rem}

\subsubsection{Fundamental solution to the diagonalized system} We now fix the zone constant $N$ large enough to guarantee that $N_{1}(t,\xi,\epsilon)$ be uniformly invertible within the hyperbolic zone $Z_{\rm hyp}(N)$. Then for $V$ solving \eqref{Syst after 1 diag}, the transformed function
\begin{equation}
V_{1}(t,\xi,\epsilon )=N_{1}^{-1}(t,\xi,\epsilon)V(t,\xi,\epsilon)
\end{equation}
satisfies due to \eqref{Identity}
\begin{equation}
\D_{t}V_{1}(t,\xi)=\left(D(\xi)+F^{(0)}(t,\xi,\epsilon)+R_{1}(t,\xi,\epsilon)\right)V_{1}(t,\xi) \label{Syst after 2 diag}
\end{equation}
with the diagonal matrix $F^{(0)}\in S\left\{0,1\right\}$ given by \eqref{eq:4.34} and the remainder $R_{1}(t,\xi,\epsilon)\in S\left\{-1,2\right\}$ specified by \eqref{eq:4.40}. We construct its fundamental solution.

\begin{thm}\label{thm:Fundamental sol in Zhyp}
Assume the Hypotheses \textbf{(H1)} and \textbf{(H2)} hold. Then the fundamental solution $
\mathcal E_{1}(t,s,\xi,\epsilon)$ to the transformed system (\ref{Syst after 2 diag}) can be represented by
\begin{equation}
\mathcal E_{1}(t,s,\xi,\epsilon)=\sqrt{\frac{b_{\epsilon}(s)}{b_{\epsilon}(t)}}\mathcal E_{0}(t,s,\xi)\mathcal Q(t,s,\xi,\epsilon)
\end{equation}
for $[s,t] \times \{(\xi,\epsilon)\}\subset Z_{\rm hyp}(N)$, where
\begin{enumerate}
\item[\bf(1)] the factor $\sqrt{\frac{b_{\epsilon}(s)}{b_{\epsilon}(t)}}$ describes the main influence of the dissipation term;
\item[\bf(2)] the matrix $\mathcal E_{0}(t,s,\xi)$ is the fundamental solution of the hyperbolic principal part $\D_{t}-D(\xi)$ given by
\begin{equation}\label{eq:E0}
\mathcal E_{0}(t,s,\xi)=\begin{pmatrix}
\exp(\i(t-s)|\xi|) & 0\\
0 & \exp(-\i(t-s)|\xi|)
\end{pmatrix};
\end{equation}
\item[\bf(3)] the matrix $\mathcal Q(t,s,\xi,\epsilon)$ 
is uniformly bounded
\begin{equation}
   \| \mathcal Q(t,s,\xi,\epsilon) \| \le \exp\bigg( \int_s^t \| R_1(\tau,\xi,\epsilon) \| \d\tau \bigg),
\end{equation}
uniformly invertible within the hyperbolic zone due to
\begin{equation}
   |\det \mathcal Q(t,s,\xi,\epsilon) | \ge  \exp\bigg( \int_s^t \|R_1(\tau,\xi,\epsilon)\|\d\tau\bigg),
\end{equation}
and has the precise behaviour for large $|\xi|$ determined by the identity matrix
\begin{equation}
   \| \mathcal Q(t,s,\xi,\epsilon) - \mathrm I \| \le \int_s^t \|R_1(t_1,\xi,\epsilon)\|  \exp\bigg( \int_s^{t_1} \| R_1(\tau,\xi,\epsilon) \| \d\tau \bigg) \d t_1  .
\end{equation}
\end{enumerate}
\end{thm}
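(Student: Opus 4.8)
The plan is to separate the three factors in the ansatz and verify each claim in turn, the only substantial work being the construction of $\mathcal Q$ and the elementary energy estimates for it. First I would explain the scalar factor: the equation $\mathrm D_t V_1 = (D(\xi) + F^{(0)} + R_1) V_1$ has diagonal part $F^{(0)}(t,\epsilon) = \tfrac{\i}{2}\mathfrak d_\epsilon(t)\,\mathrm I = \tfrac{\i}{2}\tfrac{b_\epsilon'(t)}{b_\epsilon(t)}\,\mathrm I$, which is a scalar multiple of the identity and therefore commutes with everything. Hence the substitution $V_1(t,\xi,\epsilon) = \exp\!\big(\int_s^t \tfrac{\i}{2}\mathfrak d_\epsilon(\theta)\,\mathrm d\theta\big)\, W(t,\xi,\epsilon)$ removes the $F^{(0)}$ term exactly, and since $\int_s^t \tfrac12 \mathfrak d_\epsilon(\theta)\,\mathrm d\theta = \tfrac12\big(\log b_\epsilon(t) - \log b_\epsilon(s)\big)$, the prefactor is precisely $\exp\!\big(\tfrac{\i}{2}\cdot(-\i)\log\tfrac{b_\epsilon(t)}{b_\epsilon(s)}\big)^{-1}$; writing out $\mathrm D_t = -\i\partial_t$ one gets exactly $\sqrt{b_\epsilon(s)/b_\epsilon(t)}$. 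This is legitimate within $Z_{\rm hyp}(N)$ because Lemma~\ref{lem:B_epsilon} and Hypothesis \textbf{(H1)} give $0 < b_0 \le b_\epsilon(t) \le C$, so the square root is well defined and bounded above and below.

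Next, $W$ satisfies $\mathrm D_t W = (D(\xi) + R_1(t,\xi,\epsilon)) W$. Conjugating out the constant-in-$t$ diagonal matrix $D(\xi)$ by setting $W(t,\xi,\epsilon) = \mathcal E_0(t,s,\xi)\,\mathcal Q(t,s,\xi,\epsilon)$, where $\mathcal E_0$ is the explicit unitary diagonal matrix from \eqref{eq:E0}, I would derive the equation for $\mathcal Q$: since $\mathrm D_t \mathcal E_0 = D(\xi)\mathcal E_0$ and $\mathcal E_0$ is invertible with $\|\mathcal E_0(t,s,\xi)\| = \|\mathcal E_0^{-1}(t,s,\xi)\| = 1$, one obtains
\begin{equation}\label{eq:Qode}
   \mathrm D_t \mathcal Q(t,s,\xi,\epsilon) = \widetilde R_1(t,s,\xi,\epsilon)\,\mathcal Q(t,s,\xi,\epsilon), \qquad \mathcal Q(s,s,\xi,\epsilon) = \mathrm I,
\end{equation}
with $\widetilde R_1 = \mathcal E_0^{-1} R_1 \mathcal E_0$ and hence $\|\widetilde R_1(t,s,\xi,\epsilon)\| = \|R_1(t,\xi,\epsilon)\|$. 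Proposition~\ref{prop:Diagonalisation} gives $R_1 \in \mathcal S\{-1,2\}$, and Proposition~\ref{prop:Symb. from}(3) then makes $\int_s^t \|R_1(\theta,\xi,\epsilon)\|\,\mathrm d\theta$ finite and uniformly bounded by $C|\xi|^{-1}(\Phi_\epsilon(t-1)+1)$ on the relevant $t$-ranges inside $Z_{\rm hyp}(N)$; this integrability is exactly what makes the Peano–Baker / Picard series for \eqref{eq:Qode} converge.

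From \eqref{eq:Qode} the three displayed bounds on $\mathcal Q$ are standard ODE estimates. For the norm bound I would estimate $\partial_t \|\mathcal Q\| \le \|R_1(t,\xi,\epsilon)\|\,\|\mathcal Q\|$ and integrate (Gronwall), giving $\|\mathcal Q(t,s,\xi,\epsilon)\| \le \exp\big(\int_s^t \|R_1(\theta,\xi,\epsilon)\|\,\mathrm d\theta\big)$. For the determinant, Liouville's formula gives $\partial_t \det\mathcal Q = \i\,(\operatorname{tr}\widetilde R_1)\det\mathcal Q$, whence $|\det\mathcal Q(t,s,\xi,\epsilon)| = \exp\big(\Re\!\int_s^t \i\operatorname{tr}\widetilde R_1\,\mathrm d\theta\big) = \exp\big(-\Im\!\int_s^t\operatorname{tr}\widetilde R_1\,\mathrm d\theta\big) \ge \exp\big(-\int_s^t|\operatorname{tr}\widetilde R_1|\,\mathrm d\theta\big) \ge \exp\big(-2\int_s^t\|R_1(\theta,\xi,\epsilon)\|\,\mathrm d\theta\big)$; I would note that the constant in front can be absorbed so the clean form stated holds after adjusting $R_1$-bounds, or alternatively keep the sharp exponent and remark the lower bound is of this form — in any case invertibility follows since the right-hand side is bounded below by a positive constant on $Z_{\rm hyp}(N)$ thanks to $\int_s^t\|R_1\|\,\mathrm d\theta \le C|\xi|^{-1}(\Phi_\epsilon(t-1)+1)$ being small for large $N$. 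Finally, writing $\mathcal Q(t,s,\xi,\epsilon) = \mathrm I + \int_s^t \i\,\widetilde R_1(t_1,s,\xi,\epsilon)\mathcal Q(t_1,s,\xi,\epsilon)\,\mathrm d t_1$ from \eqref{eq:Qode} and inserting the already-proved norm bound for $\mathcal Q(t_1,\cdot)$ yields $\|\mathcal Q(t,s,\xi,\epsilon) - \mathrm I\| \le \int_s^t \|R_1(t_1,\xi,\epsilon)\|\exp\big(\int_s^{t_1}\|R_1(\theta,\xi,\epsilon)\|\,\mathrm d\theta\big)\,\mathrm d t_1$, which in particular tends to $0$ as $|\xi|\to\infty$. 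The main obstacle — though a mild one — is bookkeeping the sign in the $\mathrm D_t = -\i\partial_t$ convention so that the scalar prefactor comes out as $\sqrt{b_\epsilon(s)/b_\epsilon(t)}$ rather than its reciprocal, and matching the stated (slightly idealised) exponent in the determinant lower bound; everything else is a direct application of Propositions~\ref{prop:Symbolic properties}, \ref{prop:Symb. from} and \ref{prop:Diagonalisation} together with Gronwall and Liouville.
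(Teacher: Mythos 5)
Your proof follows essentially the same strategy as the paper: factor out the scalar dissipative term by integrating $\mathfrak d_\epsilon = b_\epsilon'/b_\epsilon$, conjugate by $\mathcal E_0$ to pass the principal diagonal part, and reduce the remainder to a first-order system for $\mathcal Q$ whose coefficient has the same norm as $R_1$ since $\mathcal E_0$ is unitary. The only cosmetic difference is that you invoke Gronwall and Liouville's formula directly where the paper writes out the Peano--Baker series; these give identical bounds, and your observation that the stated determinant lower bound has the wrong sign (the correct lower bound is $\exp\bigl(-2\int_s^t\|R_1\|\,\mathrm d\tau\bigr)$, which suffices for invertibility since $\int_s^t\|R_1\|\,\mathrm d\tau\lesssim 1/N$) is a legitimate correction to the theorem as printed.
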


\begin{proof}
We consider first $\D_{t}-D(\xi)-F^{0}$, i.e. the main diagonal part of the transformed system (\ref{Syst after 2 diag}). Its fundamental solution is given by
\begin{equation}
\mathcal E_{0}(t,s,\xi) \exp\bigg(-\frac12 \int_s^t \mathfrak d_\epsilon(\tau)\d\tau\bigg)  =  
\sqrt{\frac{b_{\epsilon}(s)}{b_{\epsilon}(t)}} \mathcal E_0(t,s,\xi,\epsilon),
\end{equation}
where $\mathcal E_{0}(t,s,\xi)$ is the fundamental solution to $\D_{t}-D(\xi)$ given by \eqref{eq:E0}. For the fundamental solution to the system (\ref{Syst after 2 diag}) we use an ansatz in the form
\begin{equation}
\mathcal E_1(t,s,\xi) =  \sqrt{\frac{b_{\epsilon}(s)}{b_{\epsilon}(t)}}  \mathcal E_0(t,s,\xi,\epsilon) \mathcal Q(t,s,\xi,\epsilon)
\end{equation}
for a still to be determined matrix $\mathcal Q(t,s,\xi,\epsilon)$. A simple calculation shows that $\mathcal Q(t,s,\xi,\epsilon)$ must solve
\begin{equation}
\D_{t}\mathcal Q(t,s,\xi,\epsilon)=\mathcal{R}(t,s,\xi,\epsilon)\mathcal Q(t,s,\xi,\epsilon),\qquad  \mathcal Q(s,s,\xi,\epsilon)=\mathrm I,
\end{equation}
with coefficient matrix
\begin{equation}
\mathcal{R}(t,s,\xi,\epsilon)=\mathcal E_{0}(s,t,\xi)R_{1}(t,\xi,\epsilon)\mathcal E_{0}(t,s,\xi)
\end{equation}
determined by the remainder $R_1$ and the fundamental solution of the hyperbolic principal part $\mathcal E_0$. The solution $\mathcal Q$ can thus be represented in terms of the Peano--Baker series 
\begin{equation}
\mathcal Q(t,s,\xi,\epsilon)=\mathrm I+\sum_{k=1}^{\infty}\i^{k}\int_{s}^{t}\mathcal{R}(t_{1},s,\xi,\epsilon)\int_{s}^{t_1}\mathcal{R}(t_{2},s,\xi,\epsilon)\int_{s}^{t_2}
	\dots \int_{s}^{t_{k-1}}\mathcal{R}(t_{k},s,\xi,\epsilon) \d t_{k} \dots \d t_{2} \d t_{1}, \label{Formula for Q(epsilon)}
\end{equation}
and it remains to provide estimates based on this series representation. As $\mathcal E_0$ is unitary, we obtain from the symbol estimate of the remainder $R_1$ 
\begin{equation}
\begin{split} 
 \| \mathcal R(t,s,\xi,\epsilon) \| &= \| R_1(t,\xi,\epsilon) \|  \le C |\xi|^{-1} \left( \Phi_\epsilon(t-1) + 1 \right)^2 
 \le \frac CN \left( \Phi_\epsilon(t-1) + 1 \right),
 \end{split}
\end{equation}
and thus it follows that
\begin{equation}
 \| \mathcal Q(t,s,\xi,\epsilon) \| \le \exp\bigg( \int_s^t \| R_1(\tau,\xi,\epsilon) \| \d\tau \bigg) \le \exp(C/N).
\end{equation}
Together with
\begin{equation}
   \det \mathcal Q(t,s,\xi,\epsilon) = \exp \bigg( \int_s^t \mathrm{trace}\, R_1(\tau,\xi,\epsilon) \d\tau \bigg)
\end{equation}
the uniform invertibility of $\mathcal Q$ follows. Furthermore, by using \eqref{eq:PhiSquEst1} we obtain
\begin{equation}\label{eq:Q-I_Est}
  \| \mathcal Q(t,s,\xi,\epsilon)-\mathrm I \| \le 
  C |\xi|^{-1}  \big(\Phi_\epsilon(t-1)+1\big) \exp (C/N),
\end{equation}
and the main contribution of $\mathcal Q$ for large $|\xi|$ is given by the identity matrix.
 \end{proof}
 
\subsubsection{Fundamental solution to the original system} 
After obtaining the fundamental solution to the transformed system (\ref{Syst after 2 diag}), we go back to the original problem (\ref{Transf to system}) and obtain in the hyperbolic zone the representation
\begin{equation}\label{eq:4.64}
\mathcal E_{\rm hyp}(t,s,\xi,\epsilon)=\sqrt{\frac{b_{\epsilon}(s)}{b_{\epsilon}(t)}}MN_{1}(t,\xi,\epsilon)\mathcal E_{0}(t,s,\xi) \mathcal Q(t,s,\xi,\epsilon)N_{1}^{-1}(s,\xi,\epsilon)M^{-1}
\end{equation}
for the fundamental solution. We will briefly discuss its limiting behaviour as $\epsilon\to 0$ for fixed $s<t<1$ or $1<s<t$. As $\mathcal E_0(t,s,\xi)$ is independent of $\epsilon$ and the transformation matrix $N_1(t,\xi,\epsilon)$ is already estimated by
\eqref{eq:N1epsEst}, this boils down to considering $\mathcal Q(t,s,\xi,\epsilon)$.

\begin{lem}\label{lem: Estimate N1, N1 inverse and Q}
The limit 
\begin{equation}
\mathcal Q(t,s,\xi,0) = \lim_{\epsilon\to0} \mathcal Q(t,s,\xi,\epsilon)
\end{equation}
exists for fixed $s<t<1$ or $1<s<t$, is uniformly bounded and invertible, and satisfies the estimate
\begin{equation}\label{Estimate Q}
\Vert \mathcal Q(t,s,\xi,\epsilon)-\mathcal Q(t,s,\xi,0)\Vert\lesssim \epsilon|\xi|^{-1}
\end{equation}
holds for all $[s,t]\times\{(\xi,\epsilon)\}\subset Z_{hyp}$ with the condition that $\min \{ \vert t-1\vert, |s-1| \} \ge \epsilon K$.
\end{lem}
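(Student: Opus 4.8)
The plan is to exploit the Peano--Baker series representation \eqref{Formula for Q(epsilon)} for $\mathcal Q$ together with the uniform symbol estimates on the remainder $R_1$ from Theorem~\ref{thm:Fundamental sol in Zhyp} and the $\epsilon$-convergence estimate \eqref{eq:R1epsEst} for $R_1$ at the regular face. First I would fix $s<t<1$ (the case $1<s<t$ being identical by symmetry) and recall that on $[s,t]$, for $\epsilon$ small enough that $\min\{|t-1|,|s-1|\}\ge\epsilon K$, the whole segment $[s,t]\times\{(\xi,\epsilon)\}$ lies in $Z_{\rm hyp}(N)$ and, moreover, stays away from the mollification interval $[1-\epsilon K,1+\epsilon K]$. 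On this set $\Phi_\epsilon(\tau-1)=\epsilon^{-1}\Phi(\epsilon^{-1}(\tau-1))$ vanishes for $\epsilon$ small (since $|\tau-1|\ge\min\{|t-1|,|s-1|\}>\epsilon K'$ eventually), so the factor $(\Phi_\epsilon(\tau-1)+1)$ equals $1$ there, and $\|R_1(\tau,\xi,\epsilon)\|\le C|\xi|^{-1}$ uniformly. Hence each term of the Peano--Baker series is dominated by $(C|\xi|^{-1}(t-s))^k/k!$, the series converges uniformly, and the same bound applies to the limiting series built from $R_1(\tau,\xi,0)$; this gives existence of the limit $\mathcal Q(t,s,\xi,0)$, its uniform boundedness, and — via the trace/determinant identity $\det\mathcal Q=\exp(\int_s^t\mathrm{trace}\,R_1\,\d\tau)$ already used in the theorem — its uniform invertibility.

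For the quantitative estimate \eqref{Estimate Q} I would subtract the two Peano--Baker series term by term and use the standard telescoping identity for iterated-integral differences: the difference of the $k$-th terms involving the kernels $\mathcal R(\cdot,s,\xi,\epsilon)$ and $\mathcal R(\cdot,s,\xi,0)$ is a sum of $k$ iterated integrals, each containing exactly one factor $\mathcal R(t_j,s,\xi,\epsilon)-\mathcal R(t_j,s,\xi,0)$ and $k-1$ factors that are either $\mathcal R(\cdot,s,\xi,\epsilon)$ or $\mathcal R(\cdot,s,\xi,0)$. Since $\mathcal E_0$ is unitary and $\epsilon$-independent, $\|\mathcal R(t_j,s,\xi,\epsilon)-\mathcal R(t_j,s,\xi,0)\|=\|R_1(t_j,\xi,\epsilon)-R_1(t_j,\xi,0)\|\le C\epsilon|\xi|^{-1}$ by \eqref{eq:R1epsEst}, while all remaining factors are bounded by $C|\xi|^{-1}$. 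Summing over $j$ and then over $k$ yields
\begin{equation}
\|\mathcal Q(t,s,\xi,\epsilon)-\mathcal Q(t,s,\xi,0)\|
\le C\epsilon|\xi|^{-1}\sum_{k\ge1}\frac{k\,(C|\xi|^{-1}(t-s))^{k-1}}{(k-1)!}
\le C\epsilon|\xi|^{-1}\exp(C/N),
\end{equation}
which is the claimed bound $\lesssim\epsilon|\xi|^{-1}$.

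The main obstacle is a bookkeeping one rather than a conceptual one: one must be careful that the estimate \eqref{eq:R1epsEst} is only claimed to hold uniformly on the regular face, i.e. away from the singular time, so the restriction $\min\{|t-1|,|s-1|\}\ge\epsilon K$ is genuinely needed to ensure the entire integration simplex $\{s\le t_k\le\cdots\le t_1\le t\}$ avoids $[1-\epsilon K,1+\epsilon K]$ — this is where one uses that $[s,t]$ is a fixed compact interval not containing $1$, so for $\epsilon$ below a threshold depending on $s,t$ the interval $[1-\epsilon K,1+\epsilon K]$ is disjoint from $[s,t]$. Once that geometric observation is in place, the rest is the routine Gronwall-type estimate for Peano--Baker series differences, and the factor $|\xi|^{-1}$ (rather than just $\epsilon$) is automatic because every kernel factor carries one power of $|\xi|^{-1}$ and the distinguished difference factor carries $\epsilon|\xi|^{-1}$.
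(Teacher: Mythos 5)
Your proposal follows the same strategy as the paper: both prove the lemma by a perturbation argument based on the Peano--Baker series, using the boundary estimate $\|\mathcal R(\cdot,\xi,0)\|\le C|\xi|^{-1}$ to construct $\mathcal Q(t,s,\xi,0)$ and the difference estimate \eqref{eq:R1epsEst} to control $\mathcal Q(\epsilon)-\mathcal Q(0)$. The implementation differs slightly: you telescope the two series term by term, whereas the paper factorises multiplicatively, writing $\mathcal Q(t,s,\xi,\epsilon)=\mathcal Q(t,s,\xi,0)\,\Xi(t,s,\xi,\epsilon)$ and deriving a first-order ODE for $\Xi$ whose coefficient $\mathcal Q(0)\bigl(\mathcal R(\epsilon)-\mathcal R(0)\bigr)\mathcal Q(0)^{-1}$ is bounded by $C\epsilon|\xi|^{-1}$; the bound on $\Xi-\mathrm I$ then follows from one further Peano--Baker series. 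The paper's factorisation has the advantage that it never needs a bound on $\mathcal R(\cdot,\xi,\epsilon)$ itself, only on $\mathcal R(\cdot,\xi,0)$ and on the difference.

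This points to a small but real gap in your justification. You claim that on $[s,t]$ the factor $\Phi_\epsilon(\tau-1)$ vanishes "for $\epsilon$ small", so that $\|\mathcal R(\tau,s,\xi,\epsilon)\|\le C|\xi|^{-1}$. But $\supp\Phi=[-K',K']$ with $K\le K'$, so vanishing of $\Phi_\epsilon$ on $[s,t]$ requires $\min\{|t-1|,|s-1|\}>\epsilon K'$, whereas the lemma only assumes $\ge\epsilon K$. In the borderline regime $\epsilon K\le\min\{|t-1|,|s-1|\}<\epsilon K'$ your stated bound on $\mathcal R(\epsilon)$ does not follow from the symbol estimate (which gives $C|\xi|^{-1}(\Phi_\epsilon+1)^2$, with $\Phi_\epsilon$ possibly of size $\epsilon^{-1}$). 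Since the lemma asserts a uniform estimate for all admissible $(s,t,\xi,\epsilon)$, not an asymptotic one, the "eventually" in your argument is not enough. The fix is short: on the allowed simplex write $\mathcal R(\epsilon)=\mathcal R(0)+\bigl(\mathcal R(\epsilon)-\mathcal R(0)\bigr)$ and use \eqref{eq:symb_RegF} together with \eqref{eq:R1epsEst} (valid uniformly for $|\tau-1|\ge\epsilon K$) to obtain $\|\mathcal R(\tau,s,\xi,\epsilon)\|\le C|\xi|^{-1}(1+\epsilon)\le 2C|\xi|^{-1}$; this is the bound your telescoping really needs. Alternatively, adopt the paper's multiplicative ansatz, which sidesteps the issue altogether.
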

\begin{proof}
We use \eqref{eq:R1epsEst} in combination with \eqref{Formula for Q(epsilon)} and consider
\begin{equation}
\mathcal Q(t,s,\xi,0)= \mathrm I+\sum_{k=1}^{\infty}\i^{k}\int_{s}^{t}\mathcal{R}(t_{1},s,\xi,0)\int_{s}^{t_1}\mathcal{R}(t_{2},s,\xi,0)\int_{s}^{t_2}   \dots \int_{s}^{t_{k-1}}\mathcal{R}(t_{k},s,\xi,0)\d t_{k}\dots \d t_{2}\d t_{1} \label{Formula for Q(0)}
\end{equation}
defined in terms of
\begin{equation}
\mathcal{R}(t,s,\xi,0)=\mathcal E_{0}(s,t,\xi)R_{1}(t,\xi,0)\mathcal E_{0}(t,s,\xi)
\end{equation}
with
\begin{equation}
   \|\mathcal R(t,s,\xi,0)\|=\|R_1(t,\xi,0)\| \le C |\xi|^{-1}
\end{equation}
uniformly in $0<s<t<1$ or $1<s<t<2$ and $|\xi|>N$. It thus follows that $\mathcal Q(t,s,\xi,0)$ is uniformly bounded and uniformly invertible. To estimate the difference between $\mathcal Q(t,s,\xi,\epsilon)$ and $\mathcal Q(t,s,\xi,0)$, we use a perturbation argument based on the estimate
\begin{equation}
   \|\mathcal R(t,s,\xi,\epsilon) - \mathcal R(t,s,\xi,0)\| =
   \|R_1(t,\xi,\epsilon) - R_1(t,\xi,0)\| \le C \epsilon |\xi|^{-1}
\end{equation}
for $|t-1| \ge \epsilon K$ following from \eqref{eq:R1epsEst}. Differentiating 
\begin{equation}
 \mathcal Q(t,s,\xi,\epsilon) = \mathcal Q(t,s,\xi,0) \Xi(t,s,\xi,\epsilon) 
\end{equation}
yields for $\Xi(t,s,\xi,\epsilon)$ the equation
\begin{equation}
 \D_t  \Xi(t,s,\xi,\epsilon) = \mathcal Q(t,s,\xi,0) \big(\mathcal R(t,s,\xi,\epsilon) - \mathcal R(t,s,\xi,0) \big) \mathcal Q(s,t,\xi,0) 
 \Xi(t,s,\xi,\epsilon)
\end{equation}
with initial condition $\Xi(s,s,\xi,\epsilon)=\mathrm I$ and coefficient matrix estimated by 
\begin{equation}
\| \mathcal Q(t,s,\xi,0) \big(\mathcal R(t,s,\xi,\epsilon) - \mathcal R(t,s,\xi,0) \big) \mathcal Q(s,t,\xi,0) \| \le C  \epsilon |\xi|^{-1}.
\end{equation}
Therefore, using the representation of $\Xi(t,s,\xi,\epsilon)$ in terms of  the Peano--Baker series we obtain the estimate
\begin{equation}
 \|\Xi(t,s,\xi,\epsilon)\| \le \exp\left(C\epsilon |\xi|^{-1} |t-s| \right) = 1 + \mathcal O(\epsilon |\xi|^{-1})
\end{equation}
uniform with respect to $t$ and $s$ for $|t-1|,|s-1|\ge \epsilon K$ and thus the desired statement follows.
\end{proof}

\begin{prop}\label{prop:Estimate E_{hyp}}
The estimate
\begin{align}\label{eq:4.76}
\Vert \mathcal E_{\rm hyp}(t,s,\xi,\epsilon)-\mathcal E_{\rm hyp}(t,s,\xi,0)\Vert &\lesssim 
\epsilon
\end{align}
holds for all $[s,t]\times\{(\xi,\epsilon)\}\subset Z_{hyp}$ with the condition that $\min \{ \vert t-1\vert, |s-1| \} \ge \epsilon K$.\end{prop}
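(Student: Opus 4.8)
The plan is to read the estimate off directly from the factorised representation \eqref{eq:4.64} by a telescoping argument, using that every individual factor is uniformly bounded on $Z_{\rm hyp}(N)$ and that each of them differs from its $\epsilon\to0$ limit by $\mathcal O(\epsilon)$. Note first that $\mathcal E_{\rm hyp}(t,s,\xi,0)$ is the product obtained by replacing in \eqref{eq:4.64} the net $b_\epsilon$ by $b$ and $N_1(t,\xi,\epsilon),\mathcal Q(t,s,\xi,\epsilon),N_1^{-1}(s,\xi,\epsilon)$ by their limits, all of which exist for $s<t<1$ or $1<s<t$ by Lemma~\ref{lem:b_epsilon-b_0}, the remark following Proposition~\ref{prop:Diagonalisation}, and Lemma~\ref{lem: Estimate N1, N1 inverse and Q}.

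First I would record the uniform bounds. The matrices $M,M^{-1}$ are constant; $\mathcal E_0(t,s,\xi)$ is unitary and $\epsilon$-independent; $N_1(t,\xi,\epsilon),N_1^{-1}(t,\xi,\epsilon)\in\mathcal S\{0,0\}$ by Proposition~\ref{prop:Diagonalisation}, hence uniformly bounded, as are their limits; $\mathcal Q(t,s,\xi,\epsilon)$ and $\mathcal Q(t,s,\xi,0)$ are uniformly bounded by Theorem~\ref{thm:Fundamental sol in Zhyp} and Lemma~\ref{lem: Estimate N1, N1 inverse and Q}; and the scalar prefactor satisfies $b_0\le b_\epsilon(\cdot)\le\max_{[-K,1+K]}b$ by Lemma~\ref{lem:B_epsilon}, so $\sqrt{b_\epsilon(s)/b_\epsilon(t)}$ and $\sqrt{b(s)/b(t)}$ stay in a fixed compact subinterval of $(0,\infty)$.

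Next I would expand the difference. Writing $\mathcal E_{\rm hyp}(t,s,\xi,\epsilon)=P_1(\epsilon)\cdots P_m(\epsilon)$ and $\mathcal E_{\rm hyp}(t,s,\xi,0)=P_1(0)\cdots P_m(0)$ as ordered products of the factors in \eqref{eq:4.64}, the difference telescopes as
\begin{equation*}
  \mathcal E_{\rm hyp}(t,s,\xi,\epsilon)-\mathcal E_{\rm hyp}(t,s,\xi,0)
  =\sum_{j=1}^{m} P_1(0)\cdots P_{j-1}(0)\,\big(P_j(\epsilon)-P_j(0)\big)\,P_{j+1}(\epsilon)\cdots P_m(\epsilon),
\end{equation*}
and by the uniform bounds above each summand is controlled by $C\,\|P_j(\epsilon)-P_j(0)\|$. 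It then remains to estimate the individual differences: the contributions of $M$ and $M^{-1}$ vanish; for the scalar prefactor one has $\big|\sqrt{b_\epsilon(s)/b_\epsilon(t)}-\sqrt{b(s)/b(t)}\big|\lesssim\epsilon$, which follows from $|b_\epsilon-b|\lesssim\epsilon$ on $|\cdot-1|\ge\epsilon K$ (Lemma~\ref{lem:b_epsilon-b_0} with $k=0$) together with the Lipschitz bounds for $x\mapsto\sqrt{x}$ and $x\mapsto 1/x$ on compact subsets of $(0,\infty)$ and the lower bound $b_0$; the differences $N_1(t,\xi,\epsilon)-N_1(t,\xi,0)$ and $N_1^{-1}(s,\xi,\epsilon)-N_1(s,\xi,0)^{-1}$ are $\le C\epsilon|\xi|^{-1}$ by \eqref{eq:N1epsEst} and \eqref{eq:N1epsInvEst}; and $\mathcal Q(t,s,\xi,\epsilon)-\mathcal Q(t,s,\xi,0)$ is $\lesssim\epsilon|\xi|^{-1}$ by Lemma~\ref{lem: Estimate N1, N1 inverse and Q}. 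Since $|\xi|\ge N$ on $Z_{\rm hyp}(N)$, the three $|\xi|^{-1}$-factors are harmless and all summands are $\mathcal O(\epsilon)$; adding them up yields \eqref{eq:4.76}.

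The genuine work has already been done in the lemmata: the delicate estimate is the $\mathcal O(\epsilon|\xi|^{-1})$ control of $\mathcal Q(t,s,\xi,\epsilon)-\mathcal Q(t,s,\xi,0)$ in Lemma~\ref{lem: Estimate N1, N1 inverse and Q}, resting on the regular-face estimate \eqref{eq:R1epsEst} and a Peano--Baker perturbation argument, while the hypothesis $\min\{|t-1|,|s-1|\}\ge\epsilon K$ is precisely what is needed so that the whole interval $[s,t]$ stays on one side of the singularity and Lemmas~\ref{lem:b_epsilon-b_0}, \ref{lem: Estimate N1, N1 inverse and Q} together with \eqref{eq:N1epsEst}, \eqref{eq:N1epsInvEst} apply. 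The step carried out here is then pure bookkeeping, so I do not expect any real obstacle beyond keeping track of the product structure.
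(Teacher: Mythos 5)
Your proof is correct and follows the same route as the paper: both read the estimate off the factorised representation \eqref{eq:4.64} by a telescoping argument, using the uniform boundedness of each factor together with the $\mathcal O(\epsilon|\xi|^{-1})$ bounds \eqref{eq:N1epsEst}, \eqref{eq:N1epsInvEst}, \eqref{Estimate Q} and Lemma~\ref{lem:b_epsilon-b_0} for the scalar prefactor. Your version is if anything slightly more careful, spelling out the telescoping identity and the Lipschitz step for $\sqrt{b_\epsilon(s)/b_\epsilon(t)}$ that the paper leaves implicit.
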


\begin{proof}
The proof follows directly from the representation \eqref{eq:4.64} of $\mathcal E_{\rm hyp}(t,s,\xi,\epsilon)$ combined with an 
analogous formula for the limit $\mathcal E_{\rm hyp}(t,s,\xi,0) = \lim_{\epsilon\to0} \mathcal E_{\rm hyp}(t,s,\xi,\epsilon)$. As all terms in \eqref{eq:4.64} are uniformly bounded within the hyperbolic zone we obtain
\begin{equation}
\begin{split}
 \| \mathcal E_{\rm hyp}(t,s,\xi,\epsilon)-\mathcal E_{\rm hyp}(t,s,\xi,0)\|
 \lesssim& \left| \sqrt{\frac{b_{\epsilon}(s)}{b_{\epsilon}(t)}} - \sqrt{\frac{b_{\epsilon}(0)}{b_{\epsilon}(0)}} \right|
 + \| N_1(t,\xi,\epsilon) - N_1 (t,\xi,0) \| 
 \\&+ \|\mathcal Q(t,s,\xi,\epsilon) - \mathcal Q(t,s,\xi,0) \|
 + \| N_1^{-1}(s,\xi,\epsilon) - N_1^{-1} (s,\xi,0) \| 
 \end{split}
\end{equation}
Each of the last three differences appearing on the right hand side can be controlled by $\epsilon |\xi|^{-1}$ by
estimate \eqref{eq:N1epsEst} for $N_1(t,\xi,\epsilon)$ and \eqref{eq:N1epsInvEst} for $N_1^{-1}(s,\xi,\epsilon)$, and by estimate
 \eqref{Estimate Q} for $\mathcal Q(t,s,\xi,\epsilon)$. Furthermore, by Proposition~\ref{lem:b_epsilon-b_0}
 for $b_\epsilon(s)$ and $b_\epsilon(t)$, we know that the first difference is controlled by $\epsilon$. The desired estimate for the fundamental solution follows.
\end{proof}

\subsection{Treatment in the singular zone}
Now we consider equation \eqref{Eqn in phase space} within the singular zone.  In order to describe its fundamental solution we use the substitution $\tau=\epsilon^{-1}(t-1)$ and replace the parameter $|\xi|$ by $\Lambda=\epsilon|\xi|$. Then the equation \eqref{Eqn in phase space} can be rewritten as
\begin{equation}
\hat{u}_{\tau\tau}+\Lambda^{2}\hat{u}+\beta_{\epsilon}(\tau)\hat{u}_{\tau}=0, \label{Eqn in Zsing}
\end{equation}
where 
\begin{equation} \label{Beta_epsilon}
\beta_{\epsilon}(\tau) 
=\epsilon \mathfrak d_\epsilon(1+\epsilon \tau) 
=\frac{\int_{-\infty}^\infty b(1+\epsilon(\tau-\theta))\psi^{\prime}(\theta)\d\theta}{\int_{-\infty}^{\infty}b(1+\epsilon(\tau-\theta))\psi(\theta)\d\theta}.
\end{equation}
We recall here that  in the new coordinates the singular zone is rewritten as
\begin{equation}
Z_{\rm sing}(N)=\left\{( \tau,\Lambda,\epsilon) \mid \Lambda \leq N \Phi(\tau)+N\epsilon\right\rbrace
\end{equation}
so that the interval in which to solve our equation is given by $[\tau_{\Lambda_{1}}(\epsilon),\tau_{\Lambda_{2}}(\epsilon)]$ with implicitly defined endpoints through 
\begin{equation}
\Lambda = N\Phi(\tau_\Lambda)+\epsilon.
\end{equation}

\subsubsection{System form}
Reformulating our equation as a system in 
\begin{equation}
U(\tau,\Lambda,\epsilon)=\begin{pmatrix} \Lambda\hat{u} \\ \partial_{\tau}\hat{u} \end{pmatrix}
\end{equation}
yields
\begin{equation}
\partial_{\tau}U(\tau,\Lambda,\epsilon)=\left[ \begin{pmatrix}
0 & 0 \\ 
0 & -\beta_{\epsilon}(\tau)
\end{pmatrix} + \begin{pmatrix}
0 & \Lambda \\ 
-\Lambda & 0
\end{pmatrix}\right]U(\tau,\Lambda,\epsilon),
 \label{Transf to system in Zsing}
\end{equation}
where now the first matrix $A(\tau,\epsilon) = \diag(0,-\beta_\epsilon(\tau))$ 
is treated as the dominant part and the second matrix 
\begin{equation}
 \begin{pmatrix}
0 & \Lambda \\ 
-\Lambda & 0
\end{pmatrix} = \Lambda \begin{pmatrix}0&1\\-1&0\end{pmatrix}
 = \Lambda \mathrm J
 \end{equation}
plays the role of the remainder. 

\begin{prop}\label{prop:Limiting behaviour of Beta}
Under the assumptions {\bf (H1)} and {\bf (H2)} for the function $b$
\begin{equation} \label{Limiting behaviour of Beta}
\beta_{\epsilon}(\tau)=\beta_{0}(\tau)+\mathcal{O}(\epsilon)
\end{equation}
holds uniformly with respect to $\tau$, where   $\beta_{0}(\tau)$ is given by
\begin{equation}\label{eq:beta0eq}
\beta_{0}(\tau)=\frac{h\psi(\tau)}{h\int_{-K}^{\tau}\psi(\theta)\d \theta+b(1_{-0})}
=\frac{h\psi(\tau)}{b(1_{+0}) -h\int_{\tau}^{K}\psi(\theta)\d \theta}
\end{equation}
in terms of $h=b(1_{+0}) - b(1_{-0})$  the jump of $b$ at $t=1$. 
\end{prop}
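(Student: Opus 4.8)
The plan is to work directly from the explicit formula \eqref{Beta_epsilon}, writing
\[
\beta_\epsilon(\tau)=\frac{N_\epsilon(\tau)}{D_\epsilon(\tau)},\qquad
N_\epsilon(\tau)=\int_{-\infty}^\infty b\big(1+\epsilon(\tau-\theta)\big)\psi'(\theta)\,\d\theta,\qquad
D_\epsilon(\tau)=\int_{-\infty}^\infty b\big(1+\epsilon(\tau-\theta)\big)\psi(\theta)\,\d\theta,
\]
and analysing the numerator and denominator separately. Note first that $D_\epsilon(\tau)=b_\epsilon(1+\epsilon\tau)\ge b_0>0$ by Lemma~\ref{lem:B_epsilon}, so the quotient is harmless once the two ingredients are understood.

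The decisive step is the numerator. Since $b$ is smooth with bounded derivatives on each side of its jump by \textbf{(H2)}, and the jump of the argument $1+\epsilon(\tau-\theta)$ at the value $1$ occurs exactly at $\theta=\tau$, I would split $N_\epsilon(\tau)=\int_{-\infty}^\tau+\int_\tau^\infty$ and integrate by parts on each piece. The boundary terms at $\theta=\pm\infty$ vanish because $\psi$ is compactly supported; the boundary terms at $\theta=\tau$ produce $b(1_{+0})\psi(\tau)$ from the left piece and $-b(1_{-0})\psi(\tau)$ from the right piece, the one-sided limits appearing because $1+\epsilon(\tau-\theta)\to1\pm$ as $\theta\to\tau\mp$. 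The surviving integrals combine into $\epsilon\int_{-\infty}^\infty b'(1+\epsilon(\tau-\theta))\psi(\theta)\,\d\theta$, which is $\mathcal O(\epsilon)$ uniformly in $\tau$ since $b'$ is bounded away from $t=1$ and $\psi\in\mathrm L^1$. Hence $N_\epsilon(\tau)=h\psi(\tau)+\mathcal O(\epsilon)$ with $h=b(1_{+0})-b(1_{-0})$, uniformly in $\tau\in\R$.

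For the denominator one only needs control where $h\psi(\tau)\neq0$, i.e. on $|\tau|\le K$; there the integration variable $\theta$ is also confined to $[-K,K]$, so $|\tau-\theta|\le 2K$ is bounded. Splitting $D_\epsilon(\tau)$ at $\theta=\tau$ and estimating $b(1+\epsilon(\tau-\theta))-b(1_{\pm0})$ by the mean value theorem on each side gives $D_\epsilon(\tau)=b(1_{+0})\int_{-\infty}^\tau\psi+b(1_{-0})\int_\tau^\infty\psi+\mathcal O(\epsilon)=b(1_{-0})+h\int_{-K}^\tau\psi(\theta)\,\d\theta+\mathcal O(\epsilon)$, using $\int\psi=1$; this limit is precisely the denominator $D_0(\tau)$ of $\beta_0$, and the alternative form $b(1_{+0})-h\int_\tau^K\psi$ is the same identity rewritten. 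Combining, write
\[
\beta_\epsilon(\tau)-\beta_0(\tau)=\frac{N_\epsilon(\tau)-h\psi(\tau)}{D_\epsilon(\tau)}+h\psi(\tau)\,\frac{D_0(\tau)-D_\epsilon(\tau)}{D_\epsilon(\tau)D_0(\tau)};
\]
the first term is $\mathcal O(\epsilon)$ by the numerator estimate and $D_\epsilon\ge b_0$, and the second is $\mathcal O(\epsilon)$ because $h\psi(\tau)$ is bounded and supported in $[-K,K]$, where $|D_0-D_\epsilon|=\mathcal O(\epsilon)$ and $D_\epsilon D_0\ge b_0^2$. All constants depend only on $b_0$, $\sup_{s\neq1}|b'(s)|$, $K$ and $\|\psi\|_{\mathrm L^1}$, so the bound is uniform in $\tau$.

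The only genuine subtlety is the correct bookkeeping of the jump: splitting the convolution integrals \emph{exactly} at $\theta=\tau$ and tracking which one-sided limit $b(1_{\pm0})$ attaches to which boundary term in the integration by parts. That is exactly the step producing the jump height $h$ in $\beta_0$; everything else is routine. A secondary, but easily dispatched, point is that the denominator estimate is needed only on the compact set $\{\psi(\tau)\neq0\}$, which is what keeps the whole argument uniform even though $\tau$ ranges over all of $\R$.
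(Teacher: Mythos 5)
Your proof is correct and follows essentially the paper's own route: split $\beta_\epsilon$ into its numerator and denominator, integrate by parts across $\theta=\tau$ to extract the jump contribution $h\psi(\tau)$, and compare the denominator with its limit via the mean value theorem before combining. Your closing observation that the denominator comparison is needed only on $\supp\psi$ is a small but genuine refinement: the paper's denominator bound carries a factor $|\tau-\theta|$ and so is uniform only for $\tau$ in a compact set (which suffices there since the singular zone confines $\tau$), while your version makes the $\mathcal O(\epsilon)$ bound uniform on all of $\R$.
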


\begin{rem}
In  \eqref{eq:beta0eq} the numerator is the derivative of the denominator with respect to $\tau$. We also see that $\beta_{0}(\tau)$ is compactly supported with $\supp\beta_{0}=\supp\psi = [-K,K]$.
\end{rem}

\begin{proof}
The statement follows by considering both the numerator and the denominator of the representation
\eqref{Beta_epsilon} separately. First,
\begin{equation} \label{Estimate for Beta's numerator}
\begin{split}
\int_{-\infty}^\infty b(1+\epsilon(\tau-\theta))\psi^{\prime}(\theta)\d\theta
&=\int_{-\infty}^{\tau}b(1+\epsilon(\tau-\theta))\psi^{\prime}(\theta)\d\theta  + \int_{\tau}^{+\infty}b(1+\epsilon(\tau-\theta))\psi^{\prime}(\theta)\d\theta\\
&= h\psi(\tau)+\epsilon \int_{-\infty}^{\tau}  b^{\prime}(1+\epsilon(\tau-\theta))\psi(\theta)\d\theta
 + \epsilon \int_{\tau}^{+\infty} b^{\prime}(1+\epsilon(\tau-\theta))\psi(\theta)\d\theta \\
&=h\psi(\tau)+\mathcal O(\epsilon)
\end{split}
\end{equation}
using integration by parts and the fact that $b'$ is bounded on both $[0,1]$ and $[1,2]$. Similarly, we obtain for the denominator
\begin{equation}
\begin{split}
 \bigg|\int_{-\infty}^\infty &b(1+\epsilon(\tau-\theta))\psi(\theta)\d\theta - \int_{-\infty}^{\tau}b(1_{+0})\psi(\theta)\d\theta - \int_{\tau}^{+\infty}b(1_{-0})\psi(\theta)\d\theta\bigg| \\
&  \leq \int_{-\infty}^{\tau}\big| b(1+\epsilon(\tau-\theta))-b(1_{+0})\big|\psi(\theta)\d\theta 
+ \int_{\tau}^{\infty}\big| b(1+\epsilon(\tau-\theta))-b(1_{-0}))\big|\psi(\theta)\d\theta \\
&  \leq \int_{-K}^{\tau}C_{1}\epsilon\vert\tau-\theta\vert\psi(\theta)\d\theta + \int_{\tau}^{K}C_{2}\epsilon\vert\tau-\theta\vert\psi(\theta)\d\theta ,
\end{split}
\end{equation}
where for the last line we applied the mean value theorem to the function $b$ fir
$C_{1}=\sup_{s\in[1,2]}\vert b^{\prime}(s)\vert$ and $C_{2}=\sup_{s\in[0,1]}\vert b^{\prime}(s)\vert$.
Hence
\begin{equation}\label{Estimate for Beta's denominator}
\int_{-\infty}^\infty b(1+\epsilon(\tau-\theta))\psi(\theta)\d\theta
= b(1_{+0}) \int_{-\infty}^{\tau}\psi(\theta)\d\theta  + b(1_{-0}) \int_{\tau}^{+\infty}\psi(\theta)\d\theta
+\mathcal O(\epsilon),
\end{equation}
and therefore by combining \eqref{Estimate for Beta's numerator} and \eqref{Estimate for Beta's denominator} the desired statement follows.
\end{proof}

\subsubsection{Construction of the fundamental solution in the singular zone}
In the following we want to derive properties of  the fundamental solution to \eqref{Transf to system in Zsing}. The strategy is again to use a perturbation argument to incorporate the remainder terms. Note, that in singular variables both $\tau$ and $\Lambda$ stay bounded within $Z_{\rm sing}(N)$ and our main interest is in the characterisation of the solution when $\Lambda\to0$ and $\epsilon\to0$.

%%%%%%%%%%%%%%%%%%%%%%%%%%%%%%%Theorem:Fundamental sol in Zsing%%%%%%%%%%%%%%%%%%%%%%%%%%%%%%%%%%%%%%%%%%%%%%%%%%%
\begin{thm}\label{thm:Fundamental sol in Zsing}
The fundamental solution to the system (\ref{Transf to system in Zsing}) can be represented by
\begin{equation}\label{eq:Esing-rep}
\mathcal E_{\rm sing}(\tau,\theta,\Lambda,\epsilon)=\mathcal F(\tau,\theta,\epsilon)\mathcal G(\tau,\theta,\Lambda,\epsilon)
\end{equation}
for $[\theta,\tau]\times \{(\Lambda,\epsilon)\}\subset Z_{\rm sing}(N)$ with $\theta<\tau$, where
\begin{enumerate}
\item[\bf(1)] $\mathcal F(\tau,\theta,\epsilon)$ is the fundamental solution to the main part $\partial_{\tau}-\diag(0,-\beta_\epsilon(\tau))$
 given by
\begin{equation}
\mathcal F(\tau,\theta,\epsilon)=\begin{pmatrix}
1 & 0 \\ 
0 &   \exp\left(-\int_{\theta}^{\tau} \beta_\epsilon(\vartheta)\d\vartheta \right)
\end{pmatrix}
\end{equation}
with
\begin{equation}\label{eq:4.90}
  \exp\bigg(\int_{\theta}^{\tau} \beta_\epsilon(\theta)\d\theta \bigg)
  =
 \frac{h \Theta(\tau) + b(1_{-0})}{ h \Theta(\theta) + b(1_{-0})}
  \left(1+ \mathcal O(\epsilon)\right)
\end{equation}
 in terms of the smoothed Heaviside function
\begin{equation} \Theta(\tau) =  \int_{-\infty}^{\tau}\psi(\vartheta)\d\vartheta = 1 -  \int_{\tau}^{\infty}\psi(\vartheta)\d\vartheta \end{equation}
and the height of the jump $h=b(1_{+0})-b(1_{-0})$; and
\item[\bf(2)] the matrix $\mathcal G(\tau,\theta,\Lambda,\epsilon)$ is given as a power series
\begin{equation}\label{eq:4.91}
 \mathcal G(\tau, \theta, \Lambda,\epsilon) = \mathrm I + \sum_{k=1}^\infty \Lambda^k \mathcal G_k(\tau,\theta,\epsilon)
\end{equation}
with coefficients $\mathcal G_k$ satisfying
\begin{equation}\label{eq:4.94}
 \|  \mathcal G_k(\tau,\theta,\epsilon)  \| \le \frac{C^k |\tau-\theta|^k}{k!}
\end{equation}
uniformly in $k$ and the occurring variables.
\end{enumerate}
\end{thm}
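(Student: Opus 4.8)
The plan is to factor the fundamental solution into the contribution of the diagonal ``main part'' $\partial_\tau-\diag(0,-\beta_\epsilon(\tau))$ and a power-series correction absorbing the skew term $\Lambda\mathrm J$, exactly as in the statement.

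First I would exploit that $\beta_\epsilon$ is a logarithmic derivative. Integration by parts in $\theta$ in \eqref{Beta_epsilon} shows that the numerator equals $\frac{\d}{\d\tau}$ of the denominator, i.e. $\beta_\epsilon(\tau)=\frac{\d}{\d\tau}\log b_\epsilon(1+\epsilon\tau)$, so that $\int_{\theta}^{\tau}\beta_\epsilon(\vartheta)\,\d\vartheta=\log\frac{b_\epsilon(1+\epsilon\tau)}{b_\epsilon(1+\epsilon\theta)}$. Hence the fundamental solution $\mathcal F(\tau,\theta,\epsilon)$ of the main part is the stated diagonal matrix by direct integration, and \eqref{eq:4.90} follows by inserting the asymptotics $b_\epsilon(1+\epsilon\tau)=h\Theta(\tau)+b(1_{-0})+\mathcal O(\epsilon)$, which is precisely estimate \eqref{Estimate for Beta's denominator} from the proof of Proposition~\ref{prop:Limiting behaviour of Beta} rewritten via $b(1_{+0})\Theta+b(1_{-0})(1-\Theta)=h\Theta+b(1_{-0})$, together with the uniform lower bound $b_\epsilon\ge b_0>0$. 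The very same identity yields a two-sided bound $c\le\exp\!\big(\int_\theta^\tau\beta_\epsilon\big)\le C$ uniformly in $\tau,\theta,\epsilon$ on $Z_{\rm sing}(N)$, using Lemma~\ref{lem:B_epsilon} with $k=0$ and {\bf (H1)}; this uniform boundedness is the only quantitative input needed for part (2).

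Next I would make the ansatz $\mathcal E_{\rm sing}=\mathcal F\mathcal G$. Since $\partial_\tau\mathcal F=\diag(0,-\beta_\epsilon)\mathcal F$, system \eqref{Transf to system in Zsing} reduces to $\partial_\tau\mathcal G=\Lambda\,\widetilde{\mathrm J}(\tau,\theta,\epsilon)\mathcal G$ with $\mathcal G(\theta,\theta,\Lambda,\epsilon)=\mathrm I$, where $\widetilde{\mathrm J}=\mathcal F^{-1}\mathrm J\mathcal F=\begin{pmatrix}0 & \exp(-\int_\theta^\tau\beta_\epsilon) \\ -\exp(\int_\theta^\tau\beta_\epsilon) & 0\end{pmatrix}$, so $\|\widetilde{\mathrm J}\|\le C$ uniformly on $Z_{\rm sing}(N)$ by the previous step. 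Looking for $\mathcal G$ as a power series in $\Lambda$, $\mathcal G=\mathrm I+\sum_{k\ge1}\Lambda^k\mathcal G_k$, and matching powers gives $\partial_\tau\mathcal G_1=\widetilde{\mathrm J}$ and $\partial_\tau\mathcal G_{k+1}=\widetilde{\mathrm J}\mathcal G_k$ with vanishing data at $\tau=\theta$, hence the nested-integral (Peano--Baker type) formulas $\mathcal G_1(\tau,\theta,\epsilon)=\int_\theta^\tau\widetilde{\mathrm J}(\tau_1,\theta,\epsilon)\,\d\tau_1$ and $\mathcal G_{k+1}(\tau,\theta,\epsilon)=\int_\theta^\tau\widetilde{\mathrm J}(\tau_1,\theta,\epsilon)\mathcal G_k(\tau_1,\theta,\epsilon)\,\d\tau_1$, equivalently $\mathcal G_k(\tau,\theta,\epsilon)=\int_{\theta\le\tau_k\le\cdots\le\tau_1\le\tau}\widetilde{\mathrm J}(\tau_1,\theta,\epsilon)\cdots\widetilde{\mathrm J}(\tau_k,\theta,\epsilon)\,\d\tau_k\cdots\d\tau_1$. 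An immediate induction on $k$ using $\|\widetilde{\mathrm J}\|\le C$ gives the bound \eqref{eq:4.94}, and since $\sum_k\Lambda^k C^k|\tau-\theta|^k/k!=\exp(C\Lambda|\tau-\theta|)$ with $\Lambda$ and $\tau-\theta$ bounded on $Z_{\rm sing}(N)$, the series and its term-by-term $\tau$-derivatives converge absolutely and uniformly; thus the formally constructed $\mathcal G$ genuinely solves the reduced equation and $\mathcal F\mathcal G$ is the fundamental solution.

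I do not expect a serious obstacle: the only non-formal point is the uniform boundedness of $\widetilde{\mathrm J}$, which is handed to us by the logarithmic-derivative structure of $\beta_\epsilon$ together with Lemma~\ref{lem:B_epsilon}; everything else is the standard Peano--Baker bookkeeping plus an induction. The one point requiring a little care is that $\widetilde{\mathrm J}$ depends on the base point $\theta$ through the lower limit of $\int_\theta^{\tau_1}\beta_\epsilon$, but since $\theta$ is a frozen parameter throughout the $\tau$-integration and the bound on $\widetilde{\mathrm J}$ is uniform in it, this causes no difficulty.
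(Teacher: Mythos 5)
Your proof is correct and follows essentially the same route as the paper: factor $\mathcal E_{\rm sing}=\mathcal F\mathcal G$, identify $\mathcal F$ by integrating the diagonal part, and control $\mathcal G$ by the Peano--Baker series for the conjugated equation $\partial_\tau\mathcal G=\Lambda\,\mathcal F^{-1}\mathrm J\mathcal F\,\mathcal G$, with the uniform bound on $\mathcal F^{-1}\mathrm J\mathcal F$ coming from the two-sided bound on $b_\epsilon$. The only (harmless) stylistic difference is that you evaluate $\exp(\int_\theta^\tau\beta_\epsilon)$ exactly as $b_\epsilon(1+\epsilon\tau)/b_\epsilon(1+\epsilon\theta)$ via the logarithmic-derivative identity and then insert the asymptotics of $b_\epsilon$, whereas the paper passes through $\beta_\epsilon=\beta_0+\mathcal O(\epsilon)$ and the explicit $\beta_0$ from Proposition~\ref{prop:Limiting behaviour of Beta}; these are equivalent computations.
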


\begin{proof}
The representation for $\mathcal F$ follows by integrating the main diagonal part in equation (\ref{Transf to system in Zsing}). Using the explicit form of $\beta_0(\tau)$ from \eqref{eq:beta0eq} in combination with $\beta_{\epsilon}(\tau)=\beta_{0}(\tau)+\mathcal{O}(\epsilon)$, we obtain \eqref{eq:4.90}.

We make the ansatz
\begin{equation}
\mathcal E_{\rm sing}(\tau,\theta,\Lambda,\epsilon) =
\mathcal F(\tau,\theta,\epsilon) \mathcal G(\tau,\theta,\Lambda,\epsilon)
\end{equation}
for the fundamental solution to system \eqref{Transf to system in Zsing}. Then by construction
\begin{equation}\label{eq:4.93}
\partial_{\tau}\mathcal G(\tau,\theta,\Lambda,\epsilon)= \Lambda  \widetilde{\mathcal F}(\tau,\theta,\epsilon) 
\mathcal G(\tau,\theta,\Lambda,\epsilon)\qquad\text{with}\qquad \mathcal G(\theta,\theta,\Lambda,\epsilon)=\mathrm I,
\end{equation}
where the coefficient matrix satisfies
\begin{equation}\label{eq:tildeF_rep}
\begin{split}
\widetilde{\mathcal F}(\tau,\theta,\epsilon) &=
\mathcal F(\tau,\theta,\epsilon) \mathrm J \mathcal F(\theta,\tau,\epsilon)\\
&=
\begin{pmatrix}
0 &   \exp\left(-\int_{\theta}^{\tau} \beta_\epsilon(\vartheta)\d\vartheta \right) \\ 
-  \exp\left(-\int_{\tau}^{\theta} \beta_\epsilon(\vartheta)\d\vartheta \right) & 0
\end{pmatrix}
\\
&=\begin{pmatrix}
0 &   \exp\left(-\int_{\theta}^{\tau} \beta_0(\vartheta)\d\vartheta \right) \\ 
-  \exp\left(-\int_{\tau}^{\theta} \beta_0(\vartheta)\d\vartheta \right) & 0
\end{pmatrix} \left(1+ \mathcal O(\epsilon)\right). 
\end{split}
\end{equation}
In particular we obtain the uniform bound
\begin{equation}\label{eq:4.95}
   \| \widetilde{\mathcal F}(\tau,\theta,\epsilon)  \| \le C
\end{equation} 
independent of $\tau$, $\theta$ and $\epsilon$. Writing the solution to \eqref{eq:4.93} by the Peano--Baker series, we have for \eqref{eq:4.91} that
\begin{equation}
   \mathcal G(\tau,\theta,\Lambda,\epsilon) = \mathrm I + \sum_{k=1}^\infty \Lambda^k \int_\theta^{\tau}  
   \widetilde{\mathcal F}(\tau_1,\theta,\epsilon)  \int_\theta^{\tau_1} \widetilde{\mathcal F}(\tau_2,\theta,\epsilon) \int_\theta^{\tau_2}  \dots \int_\theta^{\tau_{k-1}}  \widetilde{\mathcal F}(\tau_k,\theta,\epsilon)  \d\tau_k\dots \d\tau_2\d\tau_1
\end{equation}
as a power series in $\Lambda$ with coefficients
\begin{equation}
 \mathcal G_k(\tau,\theta,\epsilon) = \int_\theta^{\tau}  
   \widetilde{\mathcal F}(\tau_1,\theta,\epsilon)  \int_\theta^{\tau_1} 
    \dots \int_\theta^{\tau_{k-1}}  \widetilde{\mathcal F}(\tau_k,\theta,\epsilon)  \d\tau_k\dots \d\tau_1.
\end{equation}
Combining this with \eqref{eq:4.95} concludes the proof.
\end{proof}

\begin{rem}
The asymptotic behaviour of the coefficient $\beta_\epsilon$ for $\epsilon\to0$ allows to simplify the formulas and to extract the asymptotic main contribution of the singular zone. Based on \eqref{Limiting behaviour of Beta} and \eqref{eq:beta0eq} we obtain
\begin{equation}  
\begin{split}
\exp\bigg(-\int_{\tau_{\Lambda_1}(\epsilon)}^{\tau_{\Lambda_2}(\epsilon)} \beta_\epsilon(\tau)\d\tau \bigg)
&= \exp\bigg( -\int_{\tau_{\Lambda_1}}^{\tau_{\Lambda_2}} \beta_0(\tau)\d\tau \bigg) \big(1+\mathcal O(\epsilon)\big)
\\&= \frac{b(1_{+0}) -h\int_{\tau_{\Lambda_1}}^{K}\psi(\theta)\d \theta}{b(1_{+0}) -h\int_{\tau_{\Lambda_2}}^{K}\psi(\theta)\d \theta}\big(1+\mathcal O(\epsilon)\big)
= \frac{b(1_{-0})}{b(1_{+0})} \big(1+\mathcal O(\epsilon)\big)
\end{split}
\end{equation}
with $\tau_{\Lambda_j}=\lim_{\epsilon\to0}\tau_{\Lambda_j}(\epsilon)$ and for all $\Lambda$ small enough to guarantee $\tau_{\Lambda_1}\le-K$ and $K\le \tau_{\Lambda_2}$.  Thus for all these $\Lambda$ we obtain
\begin{equation}
\mathcal F(\tau_{\Lambda_1},\tau_{\Lambda_2},\epsilon) =  \begin{pmatrix} 1 & 0\\0& \frac{b(1_{-0})}{b(1_{+0})} \end{pmatrix}\big(1+\mathcal O(\epsilon)\big).
\end{equation}
From \eqref{eq:4.91} and \eqref{eq:4.94} we conclude that $\mathcal G(\tau,\theta,\Lambda,\epsilon) - \mathrm I = \mathcal O(\Lambda)$ holds uniform with respect to $\epsilon$, $\tau$ and $\theta$ and hence 
\begin{equation}\label{eq:4.102}
 \mathcal E_{\rm sing}(\tau_{\Lambda_2},\tau_{\Lambda_1},\Lambda,\epsilon) =
 \begin{pmatrix} 1 & 0\\0& \frac{b(1_{-0})}{b(1_{+0})} \end{pmatrix} (1+\mathcal O(\epsilon) + \mathcal O(\Lambda))
\end{equation}
follows.
\end{rem}

\subsubsection{Limiting behaviour of the fundamental solution in the singular zone}
We want to describe the behaviour of the fundamental solution $\mathcal E_{\rm sing}(\tau_{\Lambda_{2}},\tau_{\Lambda_{1}},\Lambda,\epsilon)$ as $\epsilon\to0$ for fixed $\Lambda$. By \eqref{eq:4.90} we already know that the limit
\begin{equation}
\mathcal F(\tau,\theta,0) = \lim_{\epsilon\to0} \mathcal F(\tau,\theta,\epsilon)
\end{equation}
exists and satisfies
\begin{equation}\label{eq:limit_F}
\| \mathcal F(\tau,\theta,\epsilon) -\mathcal F(\tau,\theta,0) \| \le C\epsilon
\end{equation}
uniform in $\tau_{\Lambda_1} \le \theta < \tau \le \tau_{\Lambda_2}$. In the next step we consider the limiting behaviour of the power series $\mathcal G$ and in particular its coefficients
$\mathcal G_k$. 

\begin{lem}\label{prop:Estimate Q_sing}
The limit
\begin{equation}
\mathcal G_k(\tau,\theta,0) = \lim_{\epsilon\to0} \mathcal G_k(\tau,\theta,\epsilon)
\end{equation}
exists for all $\tau_{\Lambda_1} \le \theta < \tau \le \tau_{\Lambda_2}$ and satisfies 
\begin{equation}\label{eq:Gk_lim_Est}
   \|  \mathcal G_k(\tau,\theta,\epsilon) -  \mathcal G_k(\tau,\theta,0) \| \le C\epsilon.
\end{equation}
Furthermore,
\begin{equation}\label{eq:G1_rep}
 \mathcal G_1(\tau,\theta,0) = \int_\theta^\tau \begin{pmatrix}
0 &   \exp\left(\int_{\theta}^{\tau_1} \beta_0(\vartheta)\d\vartheta \right) \\ 
-  \exp\left(\int_{\tau_1}^{\theta} \beta_0(\vartheta)\d\vartheta \right) & 0
\end{pmatrix}\d\tau_1.
\end{equation}
\end{lem}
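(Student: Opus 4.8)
The plan is to argue term by term in the Peano--Baker series defining $\mathcal G_k$, exploiting the fact that everything in the singular zone is compactly supported in $\tau$ and uniformly bounded. First I would recall from \eqref{eq:tildeF_rep} that the coefficient matrix factorises as $\widetilde{\mathcal F}(\tau,\theta,\epsilon) = \widetilde{\mathcal F}(\tau,\theta,0)\,(1+\mathcal O(\epsilon))$, and more precisely that by Proposition~\ref{prop:Limiting behaviour of Beta} together with the boundedness of $\beta_0$ one has the pointwise bound $\| \widetilde{\mathcal F}(\tau,\theta,\epsilon) - \widetilde{\mathcal F}(\tau,\theta,0)\| \le C\epsilon$ uniformly in $\tau_{\Lambda_1}\le\theta<\tau_1\le\tau_{\Lambda_2}$. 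This is the workhorse estimate: it follows by writing $\exp(-\int_\theta^{\tau_1}\beta_\epsilon) - \exp(-\int_\theta^{\tau_1}\beta_0) = \exp(-\int_\theta^{\tau_1}\beta_0)\big(\exp(-\int_\theta^{\tau_1}(\beta_\epsilon-\beta_0))-1\big)$, bounding the integral $\int_\theta^{\tau_1}|\beta_\epsilon-\beta_0|\,\d\vartheta$ by $C\epsilon\,|\tau_1-\theta| \le C\epsilon\,(\tau_{\Lambda_2}-\tau_{\Lambda_1})$ — which is uniformly bounded since $\tau$ ranges over a fixed compact interval — and using $|\e^x - 1|\le |x|\e^{|x|}$.

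Next I would establish existence of the limit $\mathcal G_k(\tau,\theta,0)$ together with the estimate \eqref{eq:Gk_lim_Est} simultaneously by induction on $k$. For $k=1$ this is immediate: $\mathcal G_1(\tau,\theta,\epsilon) = \int_\theta^\tau \widetilde{\mathcal F}(\tau_1,\theta,\epsilon)\,\d\tau_1$, so the difference $\mathcal G_1(\tau,\theta,\epsilon) - \mathcal G_1(\tau,\theta,0)$ is bounded in norm by $\int_\theta^\tau \|\widetilde{\mathcal F}(\tau_1,\theta,\epsilon) - \widetilde{\mathcal F}(\tau_1,\theta,0)\|\,\d\tau_1 \le C\epsilon\,(\tau-\theta) \le C'\epsilon$, and dominated convergence gives the existence of the limit, which is exactly the formula \eqref{eq:G1_rep} once one reads off the entries of $\widetilde{\mathcal F}(\tau_1,\theta,0)$ from the third line of \eqref{eq:tildeF_rep}. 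For the inductive step I would use the nested representation
\begin{equation*}
 \mathcal G_k(\tau,\theta,\epsilon) = \int_\theta^\tau \widetilde{\mathcal F}(\tau_1,\theta,\epsilon)\, \mathcal G_{k-1}(\tau_1,\theta,\epsilon)\,\d\tau_1,
\end{equation*}
write the difference of the $\epsilon$- and $0$-versions as a telescoping sum
\begin{equation*}
 \widetilde{\mathcal F}_\epsilon \mathcal G_{k-1,\epsilon} - \widetilde{\mathcal F}_0 \mathcal G_{k-1,0} = (\widetilde{\mathcal F}_\epsilon - \widetilde{\mathcal F}_0)\mathcal G_{k-1,\epsilon} + \widetilde{\mathcal F}_0(\mathcal G_{k-1,\epsilon} - \mathcal G_{k-1,0}),
\end{equation*}
and combine the $\mathcal O(\epsilon)$ bound on the first factor with the uniform bound \eqref{eq:4.94} on $\mathcal G_{k-1,\epsilon}$ for the first summand, and the uniform bound \eqref{eq:4.95} on $\widetilde{\mathcal F}_0$ with the inductive hypothesis for the second. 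Integrating over $\tau_1$ and noting once more that $|\tau - \theta|$ stays within a fixed compact interval inside $Z_{\rm sing}(N)$ yields \eqref{eq:Gk_lim_Est}, and existence of the limit follows by the same dominated-convergence argument. (One should take care that the constant $C$ in \eqref{eq:Gk_lim_Est} is allowed to depend on $k$ through the recursion; the statement as written only asserts a bound for each fixed $k$, so this is harmless, though one can also track a factor $C^k|\tau-\theta|^k/(k-1)!$ if a $k$-uniform statement is wanted.)

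The main obstacle, such as it is, is purely bookkeeping: making sure the $\epsilon$-error does not accumulate uncontrollably through the $k$ nested integrations. This is why it matters that $\tau$ is confined to the fixed compact interval $[\tau_{\Lambda_1},\tau_{\Lambda_2}]$ (with limiting endpoints as $\epsilon\to0$), so that each integration contributes only a bounded factor and the telescoping estimate closes. There is no delicate oscillatory cancellation to exploit here, unlike in the hyperbolic zone — the variable $\Lambda$ has been scaled out into the explicit powers $\Lambda^k$ in \eqref{eq:4.91} — so the argument is a routine Gronwall-type perturbation of the Peano--Baker series. The identity \eqref{eq:G1_rep} is then just the $k=1$ instance, obtained by passing to the limit inside the single integral and substituting the limiting form of $\widetilde{\mathcal F}$ from \eqref{eq:tildeF_rep}.
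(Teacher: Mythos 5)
Your proposal is correct and follows essentially the same route as the paper: establish the pointwise bound $\|\widetilde{\mathcal F}(\tau,\theta,\epsilon)-\widetilde{\mathcal F}(\tau,\theta,0)\|\le C\epsilon$ from Proposition~\ref{prop:Limiting behaviour of Beta}, then propagate it through the Peano--Baker coefficients by telescoping and integrating over the uniformly bounded interval $[\tau_{\Lambda_1},\tau_{\Lambda_2}]$. The only cosmetic difference is that you organise the telescoping as a two-term recursion with induction on $k$, whereas the paper unfolds it into a sum of $k$ terms carrying $\ell=1,\dots,k$ difference factors; both yield the same $\mathcal O(\epsilon)$ bound and the same identification of $\mathcal G_1(\tau,\theta,0)$.
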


\begin{proof} The limiting behaviour of $\mathcal F$ implies that the limit
\begin{equation}
  \widetilde{\mathcal F}(\tau,\theta,0) = \lim_{\epsilon\to0} \widetilde{\mathcal F}(\tau,\theta,\epsilon)
\end{equation}
exists and is uniformly bounded with respect to $\tau_{\Lambda_1} \le \theta<\tau \le \tau_{\Lambda_2}$ and therefore the 
functions 
\begin{equation}
 \mathcal G_k(\tau,\theta,0) = \int_\theta^{\tau}  
   \widetilde{\mathcal F}(\tau_1,\theta,0)  \int_\theta^{\tau_1} 
    \dots \int_\theta^{\tau_{k-1}}  \widetilde{\mathcal F}(\tau_k,\theta,0)  \d\tau_k\dots \d\tau_1
\end{equation}
are good candidates to be considered for the limiting behaviour of $\mathcal G_k$. For $k=1$ the representation
\begin{equation}
   \mathcal G_1(\tau,\theta,0) = \int_\theta^\tau \widetilde{\mathcal F}(\tau_1,\theta,0) \d\tau_1
\end{equation}
corresponds directly to \eqref{eq:G1_rep} due to the formula \eqref{eq:tildeF_rep} for $\widetilde{\mathcal F}(\tau,\theta,0)$. Hence, using the analogue to \eqref{eq:limit_F} we obtain
\begin{equation}
    \|   \mathcal G_1(\tau,\theta,\epsilon)-   \mathcal G_1(\tau,\theta,0)\|
    \le  \int_\theta^\tau \|\widetilde{\mathcal F}(\tau_1,\theta,\epsilon)- \widetilde{\mathcal F}(\tau_1,\theta,0)\| \d\tau_1
    \le C |\tau-\theta| \epsilon 
\end{equation}
and using $|\tau-\theta|\le 2K'$ the first statement follows.

The estimate for $\mathcal G_k$ is obtained by telescoping the integral
\begin{equation}
\begin{split}
 \mathcal G_k(\tau,\theta,\epsilon)-   \mathcal G_k(\tau,\theta,0) =&
 \int_\theta^{\tau}  
  \big(  \widetilde{\mathcal F}(\tau_1,\theta,\epsilon) - \widetilde{\mathcal F}(\tau_1,\theta,0) \big)  \int_\theta^{\tau_1} 
   \widetilde{\mathcal F}(\tau_1,\theta,0)    \cdots \int_\theta^{\tau_{k-1}}  \widetilde{\mathcal F}(\tau_k,\theta,0)  \d\tau_k\dots \d\tau_1 \\
 &  +  \int_\theta^{\tau}  
  \big(  \widetilde{\mathcal F}(\tau_1,\theta,\epsilon) - \widetilde{\mathcal F}(\tau_1,\theta,0) \big)  \int_\theta^{\tau_1} 
\big( \widetilde{\mathcal F} (\tau_1,\theta,\epsilon)- \widetilde{\mathcal F}(\tau_1,\theta,0) \big) \\ &\qquad\times\int_\theta^{\tau_2} \widetilde{\mathcal F}(\tau_3,\theta,0)   \cdots \int_\theta^{\tau_{k-1}}  \widetilde{\mathcal F}(\tau_k,\theta,0)  \d\tau_k\dots \d\tau_1\\ & + \cdots,
\end{split}
\end{equation}
each term containing one difference more up to having $k$ differences as integrands. Note that this represents the difference $\mathcal G_k(\tau,\theta,\epsilon)-\mathcal G_k(\tau,\theta,0)$ in terms of the differences $ \widetilde{\mathcal F}(\tau_1,\theta,\epsilon) - \widetilde{\mathcal F}(\tau_1,\theta,0) $ and the form $\mathcal G_{k-\ell}(\tau_\ell,\theta,0)$ already estimated in the previous induction step. Hence
\begin{equation}
   \|  \mathcal G_k(\tau,\theta,\epsilon)-   \mathcal G_k(\tau,\theta,0) \|
   \lesssim \sum_{\ell=1}^k \epsilon^\ell   \lesssim \epsilon
\end{equation}
and the lemma is proved.
\end{proof}

\subsection{Bounded frequencies} We will give some remarks concerning estimates for the fundamental solution for $|\xi|\le N$.
Here it suffices to consider the system \eqref{Transf to system} in original form and to observe that its coefficient matrices have norm estimates $\|A(\xi)\| \lesssim |\xi|$ and $\|B(t,\epsilon)\| \lesssim 1+\Phi_\epsilon(t-1)$. 

Representing its solution directly by the Peano--Baker series yields 
\begin{equation}\label{eq:EboundSmallFreq}
\begin{split}
     \|\mathcal E(t,s,\xi,\epsilon) \| &\le \exp \bigg( C \int_s^t \big( |\xi| +1 + \Phi_\epsilon(\theta-1) \big) \d\theta \bigg)  \le \tilde C
\end{split}     
\end{equation}
using that $\int_0^2 \Phi_\epsilon(t-1) \d t$ is independent of $\epsilon$ and that both $|\xi|$ and $s,t$ are bounded.

\begin{rem}
Note that for dissipative problems the uniform boundedness of the fundamental solution follows already from the positivity of the coefficient of \eqref{Model} in front of $u_t$. For more general wave models this statement needs a proof and the above reasoning seems viable for this case too. 
\end{rem}

\subsection{Combining the bits}
We collect here the estimates obtained so far. As we are interested in the influence of the point singularity on the structure of the fundamental solution we consider  $t_1, t_2\in [0,2]$  with $t_{1} < 1 < t_{2}$ and look at the fundamental solution to \eqref{Transf to system} for fixed $\epsilon$ chosen sufficiently small. This is given by 
\begin{equation}
\mathcal E(t_{2},t_{1},\xi,\epsilon)=\mathcal E_{\rm hyp}(t_{2},t_{\xi_{2}}(\epsilon),\xi,\epsilon) T^{-1}(\epsilon)  \mathcal E_{\rm sing}(\tau_{\xi_{2}}(\epsilon),\tau_{\xi_{1}}(\epsilon),\epsilon|\xi|,\epsilon)   
 T(\epsilon) \mathcal E_{\rm hyp}(t_{\xi_{1}}(\epsilon),t_{1},\xi,\epsilon)
\end{equation}
with $T(\epsilon)$ the transformation matrix between the micro-energies used in the hyperbolic zone and in the singular 
zone, such as
\begin{equation}
T(\epsilon)  = \begin{pmatrix}\epsilon & 0 \\ 0 & \epsilon \end{pmatrix} = \epsilon\mathrm I.
\end{equation} 
Note that both of these matrices cancel each other and can therefore be neglected. 
As $\epsilon$ tends to $0$ we have $t_{\xi_{1}}\rightarrow 1_{-0}$ and $t_{\xi_{2}}\rightarrow 1_{+0}$. So using the estimates \eqref{eq:4.76} and \eqref{eq:4.102} we obtain for fixed $\xi$
\begin{equation}
\lim_{\epsilon\to0} \mathcal E(t_{2},t_{1},\xi,\epsilon) =
\mathcal E_{\rm hyp}(t_{2},1_{+0},\xi,0)
\begin{pmatrix}
1 & 0 \\ 
0 & H
\end{pmatrix}
\mathcal E_{\rm hyp}(1_{-0},t_{1},\xi,0),
\label{Fund sol behav}
\end{equation}
where $H=\frac{b(1_{-0})}{b(1_{+0})}$ is given in terms of the jump of $\log b$ at $t=1$.

\section{Results}

\subsection{Existence of very weak solutions}
Although in our model case the existence of very weak solutions was already established in \cite{MRT17}, we will show how to obtain this from the properties of the fundamental solution just constructed.

\begin{prop}\label{prop:A side result}
For $\epsilon\in \left(0,1\right]$, $0\le s <t \le 2$ and $|\xi|\ge N$ the fundamental solution to system \eqref{Transf to system}
 is uniformly bounded, i.e.
\begin{equation}
  \|  \mathcal E(t,s,\xi,\epsilon) \| \le C.
\end{equation}
\end{prop}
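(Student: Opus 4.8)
The plan is to obtain the uniform bound directly from an energy estimate for system \eqref{Transf to system}, using that its principal part generates a unitary flow while the dissipative perturbation, though of size $\epsilon^{-1}$ near $t=1$, has an $\epsilon$-uniformly bounded $L^1$-norm in $t$. First I would rewrite \eqref{Transf to system} as $\partial_t U = \i\big(A(\xi)+B(t,\epsilon)\big)U$ and note that $\i A(\xi)$ is skew-Hermitian (since $A(\xi)$ is real symmetric), whereas $\i B(t,\epsilon)=\diag\big(0,-\mathfrak d_\epsilon(t)\big)$ is Hermitian with $\|\i B(t,\epsilon)\|=|\mathfrak d_\epsilon(t)|$. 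Consequently, for any solution $U(t,\xi,\epsilon)$,
\begin{equation*}
\frac{\d}{\d t}\|U(t,\xi,\epsilon)\|^2 = 2\,\Re\,\big\langle \i\big(A(\xi)+B(t,\epsilon)\big)U, U\big\rangle = 2\,\big\langle \i B(t,\epsilon)U, U\big\rangle \le 2\,|\mathfrak d_\epsilon(t)|\,\|U(t,\xi,\epsilon)\|^2,
\end{equation*}
because the $\i A(\xi)$-contribution is purely imaginary. Gronwall's inequality then yields $\|U(t,\xi,\epsilon)\|^2 \le \|U(s,\xi,\epsilon)\|^2\exp\big(2\int_s^t|\mathfrak d_\epsilon(\theta)|\,\d\theta\big)$ for $0\le s\le t\le 2$, so the statement reduces to an $\epsilon$-uniform bound on $\int_0^2|\mathfrak d_\epsilon(\theta)|\,\d\theta$.

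The main (and essentially only delicate) point is precisely this $L^1$-bound: although $\mathfrak d_\epsilon$ is unbounded pointwise as $\epsilon\to0$, its total mass near the singular time stays finite. This follows from Lemma~\ref{lem:B_epsilon} together with the uniform lower bound $b_\epsilon(t)\ge b_0>0$ from Hypothesis \textbf{(H1)}, giving $|\mathfrak d_\epsilon(t)|=|b_\epsilon'(t)|/b_\epsilon(t)\le b_0^{-1}C_{1,1}\big(\Phi_\epsilon(t-1)+1\big)$, combined with the compact support of $\Phi$, which yields $\int_0^2\big(\Phi_\epsilon(\theta-1)+1\big)\,\d\theta=\int_{-1/\epsilon}^{1/\epsilon}\Phi(\tau)\,\d\tau+2\le\|\Phi\|_{L^1(\R)}+2$, a constant independent of $\epsilon$. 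Hence $\int_0^2|\mathfrak d_\epsilon(\theta)|\,\d\theta\le C$ uniformly, and therefore $\|\mathcal E(t,s,\xi,\epsilon)\|\le\exp(C)$ for all $\epsilon\in(0,1]$, all $\xi$ (in particular $|\xi|\ge N$), and all $0\le s<t\le 2$, which is the assertion. Note this argument does not even use the splitting into zones.

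Alternatively, one can assemble the bound from the zonal representations already constructed. For the hyperbolic portions one invokes Theorem~\ref{thm:Fundamental sol in Zhyp}: the factor $\sqrt{b_\epsilon(s)/b_\epsilon(t)}$ is bounded by \textbf{(H1)} and Lemma~\ref{lem:B_epsilon}, the matrices $M,M^{-1}$ are constant, $N_1,N_1^{-1}\in\mathcal S\{0,0\}$ are bounded by Propositions~\ref{prop:Diagonalisation} and~\ref{prop:Symb. from}, $\mathcal E_0$ is unitary, and $\mathcal Q$ is bounded by $\exp(C/N)$. For the singular portion one invokes Theorem~\ref{thm:Fundamental sol in Zsing}: $\mathcal F$ is bounded, since its nontrivial entry equals $b_\epsilon(1+\epsilon\theta)/b_\epsilon(1+\epsilon\tau)$, controlled by \textbf{(H1)}; and $\mathcal G$ is bounded because $\Lambda$ and $|\tau-\theta|$ stay bounded on $Z_{\rm sing}(N)$ while $\|\mathcal G_k\|\le C^k|\tau-\theta|^k/k!$. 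One then glues these factors along the decomposition of $[s,t]$ into its hyperbolic and singular pieces as in the subsection on combining the bits, observing that the transition matrices $T(\epsilon)=\epsilon\mathrm I$ are scalar and commute through harmlessly. I expect the direct energy estimate to be the cleaner and shorter route, so I would present that one.
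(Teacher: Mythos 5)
Your primary argument (the direct energy estimate) is correct and takes a genuinely different route from the paper, which instead assembles the bound by gluing the already-constructed zonal representations: $\mathcal E_{\rm hyp}$ is bounded via Theorem~\ref{thm:Fundamental sol in Zhyp}, $\mathcal E_{\rm sing}$ via Theorem~\ref{thm:Fundamental sol in Zsing}, the transition matrices $T(\epsilon)^{\pm 1}$ cancel, and the bounded-frequency region is handled by \eqref{eq:EboundSmallFreq}. Your key observation is that because $A(\xi)$ is real symmetric, $\i A(\xi)$ is skew-Hermitian and contributes nothing to $\frac{\d}{\d t}\|U\|^2$; hence the Gronwall exponent involves only $\int_s^t|\mathfrak d_\epsilon|\,\d\theta$, which is $\epsilon$-uniformly bounded by Lemma~\ref{lem:B_epsilon} together with $\Phi\in L^1(\R)$. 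This is essentially the argument the paper uses for $|\xi|\le N$ in \eqref{eq:EboundSmallFreq}, but there the crude Peano--Baker bound $\|A\|+\|B\|\lesssim|\xi|+1+\Phi_\epsilon(t-1)$ leaves a $|\xi|$-dependence that forces the restriction to bounded frequencies; by exploiting the unitarity of the $A$-flow you remove that $|\xi|$-term and the same one-line estimate covers all frequencies, making the zone decomposition and the hypothesis $|\xi|\ge N$ unnecessary for this proposition. What the paper's route buys in exchange is consistency with the structural representations it has already built (and goes on to use for the transfer-matrix computation and the derivative bounds $\|\D_t^k\mathcal E\|\le C_k\epsilon^{-k}|\xi|^k$ stated right after); the energy estimate is cleaner and more elementary for the bare boundedness claim but yields no structural information. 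Your secondary sketch (zonal assembly) matches the paper's actual proof.
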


\begin{proof}
If $[s,t]\times \{(\xi,\epsilon)\}\subset Z_{\rm hyp}(N)$, the result follows directly from the construction in the hyperbolic zone. So it remains to consider only situations where the time interval intersects with the singular zone $Z_{\rm sing}(N)$.

We focus on the situation where $(s,\xi,\epsilon)\in Z_{\rm hyp}(N)$ and $(t,\xi,\epsilon)\in Z_{\rm sing}(N)$, i.e., $s<t_{\xi_1}(\epsilon) <1$ and $t_{\xi_1}(\epsilon) < t< t_{\xi_2}(\epsilon)$. Then the fundamental solution to system \eqref{Transf to system}
is given by
\begin{equation}
\mathcal E(t,s,\xi,\epsilon)=
T^{-1}(\epsilon)
 \mathcal E_{\rm sing}(\tau_{\xi_{2}},\tau_{\xi_{1}},\epsilon|\xi|,\epsilon)
  T(\epsilon)
 \mathcal E_{\rm hyp}(t_{\xi_{1}},t,\xi,\epsilon).
\end{equation}
As the factors $\epsilon^{-1}$ and $\epsilon$ arising from $T^{\pm1}(\epsilon)$ cancel out, it suffices to show the uniform boundedness of $\mathcal E_{\rm hyp}(t,s,\xi,\epsilon)$ for $s<t$ over the hyperbolic zone and that of $\mathcal E_{\rm sing}(\tau,\theta,\Lambda,\epsilon)$ for $\theta<\tau$ over the singular zone (in singular variables).

Hence, it remains to collect the already proved boundedness results. For $|\xi|\le N$ (i.e. for $\Lambda\le N\epsilon$) the uniform bound was shown in \eqref{eq:EboundSmallFreq}. For $|\xi|> N$ and within the hyperbolic zone the boundedness follows from the representation \eqref{eq:4.64} and the boundedness of each individual factor due to Theorem~\ref{thm:Fundamental sol in Zhyp}, while within the singular zone the representation in Theorem~\ref{thm:Fundamental sol in Zsing} gives a uniform bound on the fundamental solution based on the uniform boundedness of $\tau_{\Lambda_1}$ and $\tau_{\Lambda_2}$ with respect to both $\epsilon$ and $\Lambda$.
\end{proof}

In combination with the bound $\epsilon^{-1}+|\xi|$ for the coefficient matrix of \eqref{Transf to system} we conclude the bound
\begin{equation}
    \| \D_t^k \mathcal E(t,s,\xi,\epsilon) \| \le C_k \epsilon^{-k} |\xi|^k 
\end{equation}
uniform in $s<t$,  $\epsilon>0$ and $\xi\in\R^n$. 

\begin{cor}
Let the net $(u_\epsilon)_{\epsilon\in(0,1]}$ be a solution net to the Cauchy problem \eqref{Regularised pb} for initial data $u_0\in \mathrm H^1(\R^n)$ and $u_1\in\mathrm L^2(\R^n)$. Then the estimate 
\begin{equation}
      \| \partial_t^{1+k} u(t,\cdot) \|_{\mathrm H^{-k}(\R^n)} + \| \partial_t^k u(t,\cdot)\|_{\mathrm H^{1-k}(\R^n)}  \le C_k \epsilon^{-k}
\end{equation}
holds.
\end{cor}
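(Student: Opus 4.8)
The plan is to work entirely on the Fourier side: I would express $\partial_t^j u$ through the fundamental solution $\mathcal E(t,s,\xi,\epsilon)$ of \eqref{Transf to system}, insert the operator bounds established just before the corollary, and finally return to physical space by Plancherel's theorem. Recall that with the micro-energy $U(t,\xi,\epsilon)=\big(|\xi|\,\hat u(t,\xi,\epsilon),\,\D_t\hat u(t,\xi,\epsilon)\big)$ one has $U(t,\xi,\epsilon)=\mathcal E(t,0,\xi,\epsilon)\,U(0,\xi,\epsilon)$ with the $\epsilon$-independent initial vector $U(0,\xi,\epsilon)=\big(|\xi|\,\hat u_0(\xi),\,-\i\hat u_1(\xi)\big)$. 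Since the data carry no $t$-dependence, differentiating $k$ times gives $\D_t^k U(t,\xi,\epsilon)=(\D_t^k\mathcal E)(t,0,\xi,\epsilon)\,U(0,\xi,\epsilon)$, whose two components are exactly $|\xi|\,\D_t^k\hat u$ and $\D_t^{1+k}\hat u$, i.e. (up to unimodular factors) the partial Fourier transforms of $\partial_t^k u$ and $\partial_t^{1+k}u$ — the first weighted by $|\xi|$.

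For $|\xi|\ge N$ I would use the bound $\|\D_t^k\mathcal E(t,0,\xi,\epsilon)\|\le C_k\epsilon^{-k}|\xi|^k$; since $\epsilon\le1$ and $|\xi|\asymp\langle\xi\rangle$ on this range this reads $\|\D_t^k\mathcal E\|\le C_k\epsilon^{-k}\langle\xi\rangle^k$. Estimating the first component of $U$ by $\langle\xi\rangle^{1-k}\lesssim|\xi|\,\langle\xi\rangle^{-k}$ and the second by $\langle\xi\rangle^{-k}$ directly, one obtains the pointwise bound
\[
 \langle\xi\rangle^{1-k}\,|\D_t^k\hat u(t,\xi,\epsilon)| + \langle\xi\rangle^{-k}\,|\D_t^{1+k}\hat u(t,\xi,\epsilon)| \le C_k\,\epsilon^{-k}\,|U(0,\xi,\epsilon)| \le C_k\,\epsilon^{-k}\big(\langle\xi\rangle\,|\hat u_0(\xi)| + |\hat u_1(\xi)|\big)
\]
for $|\xi|\ge N$. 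For bounded frequencies $|\xi|\le N$ I would argue directly, without the weight gymnastics: $\partial_t^{1+k}\hat u$ is, up to a unimodular factor, a component of $\D_t^k U$, $\partial_t^k\hat u$ with $k\ge1$ a component of $\D_t^{k-1}U$, and $\hat u$ itself is recovered by integrating the second component of $U$ in time; in each case \eqref{eq:EboundSmallFreq} — or rather its $t$-differentiated version obtained by the same Peano--Baker argument — together with the comparability of $\langle\xi\rangle$ to constants gives the same bound, in fact with a power of $\epsilon$ to spare.

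It then remains to square the pointwise estimates, integrate in $\xi$ over $\{|\xi|\le N\}$ and over $\{|\xi|>N\}$ separately, and apply Plancherel's theorem. This yields
\[
 \|\partial_t^k u(t,\cdot)\|_{\mathrm H^{1-k}}^2 + \|\partial_t^{1+k} u(t,\cdot)\|_{\mathrm H^{-k}}^2 \lesssim \epsilon^{-2k}\big(\|u_0\|_{\mathrm H^1}^2 + \|u_1\|_{\mathrm L^2}^2\big)
\]
for all $t\in[0,2]$, which is the assertion once $C_k$ is permitted to absorb the fixed data norms. The one point that genuinely needs care — and the reason for splitting off bounded frequencies — is the mismatch of weights: the micro-energy is built with the homogeneous factor $|\xi|$, which degenerates at $\xi=0$, whereas the target spaces $\mathrm H^s$ carry the inhomogeneous weight $\langle\xi\rangle$. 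On $\{|\xi|>N\}$ the two are equivalent and everything is routine; on $\{|\xi|\le N\}$ one cannot read $\hat u$ off the micro-energy and must instead use the uniform boundedness of $\mathcal E$ from \eqref{eq:EboundSmallFreq} directly. A secondary, purely mechanical step is to justify $\|\D_t^k\mathcal E\|\le C_k\epsilon^{-k}\langle\xi\rangle^k$ by iterating $\D_t\mathcal E=(A+B)\mathcal E$ and controlling the $t$-derivatives of $\mathfrak d_\epsilon$ through Lemma~\ref{lem:B_epsilon}.
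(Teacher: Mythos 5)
Your argument is essentially the route the paper takes (which leaves the corollary without a separate proof: it is meant to drop out of the operator bound $\|\D_t^k\mathcal E(t,s,\xi,\epsilon)\|\le C_k\epsilon^{-k}|\xi|^k$ via Plancherel), and it is correct; but you have added a genuine refinement. The paper states the bound $\|\D_t^k\mathcal E\|\le C_k\epsilon^{-k}|\xi|^k$ as ``uniform in $\xi\in\R^n$'', which cannot literally hold near $\xi=0$: already $\|\D_t\mathcal E\|=\|(A+B)\mathcal E\|\sim\|B\|\lesssim\epsilon^{-1}$ at $\xi=0$, while the stated bound would give $0$. The displayed estimate is really a $|\xi|\ge N$ bound, where $\epsilon^{-1}+|\xi|\lesssim\epsilon^{-1}|\xi|$. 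You have correctly isolated exactly this issue — the mismatch between the homogeneous weight $|\xi|$ built into the micro-energy and the inhomogeneous weight $\langle\xi\rangle$ in the target Sobolev norms — and handled the regime $|\xi|\le N$ separately via the uniform bound \eqref{eq:EboundSmallFreq} (and its $t$-differentiated versions), recovering $\hat u$ itself by integrating the second micro-energy component in time. That bounded-frequency step is the part the paper elides, and it is in fact needed; the rest of your proof (for $|\xi|\ge N$: $|\xi|\asymp\langle\xi\rangle$, insert the operator bound, square, integrate, Plancherel) matches the intended argument.

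One small presentational caveat: when you invoke the bound $\|\D_t^k\mathcal E\|\le C_k\epsilon^{-k}|\xi|^k$ on $|\xi|\ge N$, it is worth noting explicitly that the Leibniz expansion of $\D_t^{k-1}\big((A+B)\mathcal E\big)$ produces terms $\D_t^jB$ whose norms are $\lesssim\epsilon^{-(j+1)}$ by Lemma~\ref{lem:B_epsilon}, and that these are absorbed into $\epsilon^{-(j+1)}|\xi|^{j+1}$ only because $|\xi|\ge N$; for $|\xi|\le N$ one instead gets $\|\D_t^k\mathcal E\|\lesssim\epsilon^{-k}$ outright, which is exactly what your bounded-frequency argument uses. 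With that remark your proof is complete.
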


\begin{rem}
Note that the negative power of $\epsilon$ only appears for the solution at and after the singularity $t=1$, and the estimates hold without  $\epsilon$ when $t<1$.
\end{rem}

\subsection{Exceptional propagation of singularities} Now we want to prove the exceptional propagation of singularities already hinted 
 by the numerical experiments from \cite{MRT17}. For this we consider the model problem in \emph{one} space dimension and use specially prepared initial data in the form of wave packets
\begin{equation}\label{eq:5:netU0}
\begin{split}
    u_0(x)   &=   \e^{ \i x \delta^{-1} \xi_0 } \chi(x),\\
     u_1(x) &= \partial_x u_0(x) =   \e^{ \i x\delta^{-1} \xi_0 }  \big( \i\xi_0 \delta^{-1} \chi(x) + \chi'(x) \big)
\end{split}
\end{equation}
parameterised by a fixed frequency $\xi_0\in\R\setminus\{0\}$ and for a smooth rapidly decaying function $\chi\in \mathcal S(\R)$ with sufficiently small Fourier support around the origin.  Applying a Fourier transform we see that 
\begin{equation}
\begin{split}
  |\xi| \widehat {u_0}(\xi) \pm \i   \widehat {u_1}(\xi) 
  &= |\xi| \widehat\chi(\xi- \delta^{-1} \xi_0) \mp  \xi  \widehat\chi(\xi-\delta^{-1} \xi_0),
  = \begin{cases} 0 , \qquad & \pm\xi>0, \\ \pm 2 \xi \widehat \chi(\xi-\delta^{-1} \xi_0), & \pm\xi<0. \end{cases}
\end{split}
\end{equation}
Without loss of generality we can assume that $\xi_0>0$ and $\supp \widehat\chi\subset [-\xi_0/2,\xi_0/2]$. Hence, for such initial data the initial datum $U_0(\xi)$ to \eqref{Transf to system} satisfies
\begin{equation}
   M^{-1} U_0(\xi,\epsilon) = {\sqrt2} \begin{pmatrix}
   0 \\ \xi \widehat \chi(\xi-\delta^{-1} \xi_0)
   \end{pmatrix}
\end{equation} 
for the diagonaliser $M$ from \eqref{eq:Mdef}.
Let now $t<1$. As $\mathcal E_0(t,s,\xi)$ is diagonal and $\mathcal Q(t,s,\xi,\epsilon)-\mathrm I$ as well as $N_1(t,s,\xi,\epsilon)-\mathrm I$ are both bounded by $|\xi|^{-1}$  uniformly in $\epsilon>0$ (small enough such that $(t,\xi,\epsilon)\in Z_{\rm hyp}(N)$) and $s\in[0,t]$ we obtain that 
\begin{equation}
 V(t,\xi,\epsilon) =\sqrt{ \frac{b_\epsilon(0)}{b_\epsilon(t)}} N_1(t,\xi,\epsilon) \mathcal E_0(t,0,\xi)   \mathcal Q(t,0,\xi,\epsilon)  N_1^{-1}(0,\xi,\epsilon) M^{-1} U_0(\xi,\epsilon) 
\end{equation}
is given by 
\begin{equation}
V(t,\xi,\epsilon) = \sqrt{ \frac{b(0)}{b(t)}}  {\sqrt2} \begin{pmatrix}
   0 \\  \e^{-\i t \xi}  \xi \widehat \chi(\xi-\delta^{-1} \xi_0)
   \end{pmatrix} 
   + \mathcal O(1),\qquad t<1,
\end{equation}
for fixed $t$ and with a uniformly bounded remainder independent of the choice of $\delta$.
This corresponds to a wave traveling to the right plus remainder terms with smaller norm. Note that the first term behaves like $
\delta^{-1}$ due to the support assumption made for $\widehat\chi$ and thus dominates the remainder term when choosing $\delta$ small enough. 

In the following, we consider $t>1$ and ask for the influence of the point singularity at time $1$ on the behaviour of our net of solutions. If $\epsilon >0$ is small enough such that $(t,\xi,\epsilon)\in Z_{\rm hyp}(N)$ the solution is represented by 
\begin{equation}
\begin{split}
V(t,\xi,\epsilon) &=
\sqrt{\frac{b_\epsilon(0)b_\epsilon(t_{\xi_2})}{b_\epsilon(t_{\xi_1}) b_\epsilon(t)}}
N_1(t,\xi,\epsilon) \mathcal E_0(t,t_{\xi_2},\xi)   \mathcal Q(t,t_{\xi_2},\xi,\epsilon)  N_1^{-1}(t_{\xi_2},\xi,\epsilon) \\
&\quad\times  M^{-1} T^{-1}(\epsilon) \mathcal E_{\rm sing}(\tau_{\xi_2},\tau_{\xi_1},\epsilon|\xi|,\epsilon) T(\epsilon) M \\
&\quad\times N_1(t_{\xi_1},\xi,\epsilon) \mathcal E_0(t_{\xi_1},0,\xi)   \mathcal Q(t_{\xi_1},0,\xi,\epsilon)  N_1^{-1}(0,\xi,\epsilon) M^{-1} U_0(\xi,\epsilon).
\end{split} 
\end{equation}
We again look at the main terms and estimates for remainders. In order to get the desired estimates we choose first the zone constant $N$ large enough to control non-diagonal terms appearing in the transformation matrices and in $\mathcal Q$. This yields based on the symbol estimate for $N_1(t,\xi,\epsilon)-\mathrm I$ and estimate \eqref{eq:Q-I_Est} for $\mathcal Q(t,s,\xi,\epsilon)-\mathrm I$
\begin{equation}
\begin{split}
   V(t,\xi,\epsilon) &= \sqrt{\frac{b(0)b(1_{+0})}{b(1_{-0})b(t)}}
     \frac1{\sqrt{2} }
     \begin{pmatrix}
     \e^{\i(t-1)\xi} & 0 \\ 0 & \e^{-\i (t-1) \xi} 
     \end{pmatrix}
     \begin{pmatrix} H+1 & H-1 \\ H-1 & H+1 \end{pmatrix}
     \begin{pmatrix}
   0 \\  \e^{-\i t \xi}  \xi \widehat \chi(\xi-\delta^{-1} \xi_0)
   \end{pmatrix} 
  \\&\quad  +  \mathcal O(\epsilon) + \mathcal O(\epsilon|\xi|) + \mathcal O(1/N)
\end{split}
\end{equation}
using in an essential way that the $T(\epsilon)$-terms cancel out, that $|t_{\xi_i}(\epsilon)-1|\le C\epsilon$ combined with
\begin{equation}
\begin{split}
  \|N_1(t,\xi,\epsilon)-\mathrm I\|+  \|N_1^{-1}(0,\xi,\epsilon) - \mathrm I \|  &\le C |\xi|^{-1},\\
  \|\mathcal Q(t,t_{\xi_2},\xi,\epsilon)-\mathrm I\| +  \|\mathcal Q(t_{\xi_1},0,\xi,\epsilon) - \mathrm I\| &\le C/N, \\
 \|N_1^{-1}(t,t_{\xi_2},\xi,\epsilon)-\mathrm I\| + \|N_1(t_{\xi_1},\xi,\epsilon) - \mathrm I \|  &\le C /N 
\end{split}
\end{equation}
due to \eqref{eq:N1}, \eqref{eq:Q-I_Est}  and \eqref{eq:txi-def} and that 
\begin{equation}
\begin{split}
   M^{-1}  \mathcal E_{\rm sing}(\tau_{\xi_2},\tau_{\xi_1},\epsilon|\xi|,\epsilon) M
   & =  \frac12 \begin{pmatrix} 1 & 1 \\ -1 & 1 \end{pmatrix}
   \begin{pmatrix} 1 & 0 \\ 0 & H \end{pmatrix} 
   \begin{pmatrix} 1 & -1 \\ 1 & 1 \end{pmatrix}
   + \mathcal O(\epsilon) + \mathcal O(\epsilon|\xi|)
   \\& =  \frac12 \begin{pmatrix} H+1 & H-1 \\ H-1 & H+1 \end{pmatrix}
    + \mathcal O(\epsilon) + \mathcal O(\epsilon|\xi|)
\end{split}
\end{equation}
due to \eqref{eq:4.102}  with $H  =\frac{b(1_{-0})}{b(1_{+0})}\in(0,1]$. As for our net of initial data $|\xi|\sim \delta^{-1}$, the second remainder term is of order $\epsilon \delta^{-1}$ and thus negligible for $\epsilon$ small enough and $\delta$ fixed.

To recover the solution $u(t,x)$, we have to multiply by the matrix $M$ and apply the inverse Fourier transform. Thus we obtain the following theorem.

\begin{thm}
The very weak solution corresponding to the net of initial date \eqref{eq:5:netU0} is described (up to terms small compared to the solution itself) 
\begin{itemize}
\item by a wave travelling to the right for $t<1$; and
\item by two waves travelling to the left and to the right for all $t>1$.
\end{itemize}
\end{thm}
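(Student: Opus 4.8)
The plan is to read the very weak solution off directly from the representations of the fundamental solution already obtained and then pass back to physical space by an inverse Fourier transform; no new analytic estimate is needed, only bookkeeping of the pieces assembled in \eqref{Fund sol behav}. First I would record that the net of initial data \eqref{eq:5:netU0} has Fourier transform supported in $\{\xi:|\xi-\delta^{-1}\xi_0|\le\xi_0/2\}$, so that for $\delta$ small this support lies in $\{|\xi|>N\}$ and, for $\epsilon$ small, already inside the hyperbolic zone at time $t=0$; in particular $|\xi|\sim\delta^{-1}$ throughout, the bounded-frequency regime $|\xi|\le N$ contributes nothing, and each frequency enters the singular zone only on the $\epsilon$-thin slab $|t-1|\le\epsilon K$. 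Thus on the relevant support the function $V(t,\xi,\epsilon)=M^{-1}U(t,\xi,\epsilon)$ is given exactly by the expressions computed in the paragraphs preceding the theorem, together with the remainders listed there.

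For $t<1$ I would take the computed $V(t,\xi,\epsilon)$, multiply by $M$ to recover $U$, extract $\widehat u(t,\xi)$ from the first component $|\xi|\widehat u=\xi\widehat u$ (using $\xi>0$ on the support), and apply the one-dimensional inverse Fourier transform. The substitution $\eta=\xi-\delta^{-1}\xi_0$ turns the phase $\e^{\i(x-t)\xi}$ into $\e^{\i(x-t)\delta^{-1}\xi_0}\chi(x-t)$, a wave packet rigidly translating to the right with unit speed, of size $\sim\delta^{-1}$ in the energy norm, which dominates the uniformly $\mathcal O(1)$ remainder once $\delta$ is fixed small.

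For $t>1$ the same recipe, applied to the explicit $V(t,\xi,\epsilon)$ built from $\mathcal E_0(t,1,\xi)$, the mixing matrix $\frac12\begin{pmatrix}H+1&H-1\\ H-1&H+1\end{pmatrix}$ arising from $M^{-1}\mathcal E_{\rm sing}M$ via \eqref{eq:4.102}, and $\mathcal E_0(1,0,\xi)$ acting on $M^{-1}U_0$, produces $\widehat u(t,\xi)$ as a sum of two pieces: one proportional to $(H+1)$ carrying a phase $\e^{-\i t\xi}$ (up to a fixed shift) and one proportional to $(H-1)$ carrying the reversed phase $\e^{\i t\xi}$ (up to a fixed shift). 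After the inverse Fourier transform these become a right-moving packet $\chi(x-t)$ of amplitude $\propto H+1$ and a left-moving packet $\chi(x+t-2)$ of amplitude $\propto H-1=-h/b(1_{+0})$, the latter being precisely the reflected wave observed numerically in \cite{MRT17}, with the reflection index expressed through the jump $h$ of $b$. Both amplitudes scale like $\delta^{-1}$ in the energy norm.

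The one genuinely delicate point, and the one I would be most careful with, is making ``small compared to the solution itself'' precise: the main contributions have size of order $\delta^{-1}$ in the micro-energy because of the factor $\xi\widehat\chi$, whereas the contributions of $N_1-\mathrm I$, of $\mathcal Q-\mathrm I$, and of the $\mathcal O(\epsilon)$- and $\mathcal O(\epsilon|\xi|)$-errors coming from \eqref{eq:4.76}, \eqref{Estimate Q} and Theorems~\ref{thm:Fundamental sol in Zhyp} and \ref{thm:Fundamental sol in Zsing} are smaller by a factor $1/N$ or by a positive power of $\epsilon$. Since $N$ can be taken arbitrarily large (enlarging $Z_{\rm hyp}(N)$) and $\epsilon$ arbitrarily small, the stated description holds uniformly as $\epsilon\to0$, and the remaining work is purely the bookkeeping of factors that cancel (the $T(\epsilon)$-terms) and factors that are already uniformly bounded.
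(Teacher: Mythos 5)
Your proposal is correct and follows essentially the same route as the paper: take the wave-packet initial data \eqref{eq:5:netU0}, note that its Fourier support lies in $\{|\xi|>N\}$ for $\delta$ small so only the hyperbolic and singular zones matter, propagate $M^{-1}U_0$ through the explicit factorisations of $\mathcal E_{\rm hyp}$ and $\mathcal E_{\rm sing}$, identify the transfer matrix $\tfrac12\begin{pmatrix}H+1&H-1\\H-1&H+1\end{pmatrix}$ from \eqref{eq:4.102}, and control the $N_1-\mathrm I$, $\mathcal Q-\mathrm I$, $\mathcal O(\epsilon)$ and $\mathcal O(\epsilon|\xi|)$ remainders against the $\delta^{-1}$-size of the main term. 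The only cosmetic slip is writing $|t-1|\le\epsilon K$ for the singular-zone slab; the singular zone is governed by $\supp\Phi=[-K',K']$ and thus has width $\le 2\epsilon K'$, but since $K\le K'$ and both shrink like $\epsilon$ this does not affect the argument.
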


\begin{rem}
The partial reflection of rays at the singularity is characterised by the matrix
\begin{equation}
   \frac1{2} \begin{pmatrix} H+1 & H-1 \\ H-1 & H+1 \end{pmatrix}
\end{equation}
in terms of the jump of $\log b$ at $t=1$. 

Thus, if the coefficient $b$ has no jump and therefore $H=1$ this matrix becomes the identity matrix and for $t>1$ only one wave propagates to the right. Hence, no reflected wave occurs.

If $b$ has a jump we can compare the amplitude of both travelling waves. For this we fix a sufficiently small $\delta>0$ and write down the main terms of the travelling wave as
\begin{equation}
   u(t,x) = \sqrt{\frac{b(0)}{b(t)}} \mathbf u(x-t),\qquad 0<t<1
\end{equation}
and 
\begin{equation}
   u(t,x) =  \frac{H+1}{2\sqrt H}\sqrt{\frac{b(0)}{b(t)}} \mathbf u(x-t)   +   \frac{H-1}{2\sqrt H} \sqrt{\frac{b(0)}{b(t)}} \mathbf u(x-2+t),\qquad t>1.
\end{equation}
The first term corresponds to a wave continuing in the same direction but with amplitude multiplied by $\frac{H+1}{2\sqrt H}$, while the second term gives the reflected part with amplitude multiplied by $\frac{H-1}{2\sqrt H}$.  
\end{rem}

\begin{rem}
The related wave model
\begin{equation}
  u_{tt} - \Delta u + \delta_1(t) u_t = 0
\end{equation}
with coefficient given by the Delta distribution supported in $t=1$ appears almost as a special case of treatment here in the paper. For the choice of $b(t) = 1/2$ for $0\le t<1$ and $b(t)=3/2$ for $1<t\le 2$ we obtain a closely related net of coefficients leading to $H=1/3$ and a resulted transfer matrix at the singularity.

The true consideration of the above equation can be done on lines similar to the treatment provided here in the paper. This would lead to the (related) transfer matrix
\begin{equation} \frac1{2\e} \begin{pmatrix}  1+ \e & 1-\e \\ 1-\e & 1+\e \end{pmatrix} \end{equation} .
\end{rem}

\begin{rem}
The arguments presented in this section for the case of one space dimension applies in a similar way to higher dimensions. The main reflected wave travels in the opposite direction to the main one, and lower order terms could propagate along cones emanating from the point of interaction of singularities.
\end{rem}

\section{Concluding remarks}

We will conclude this article with some comments on the tools and techniques developed so far and mention some open problems and challenges.

\begin{enumerate}
\item The symbol classes used in the treatment were adapted to one point singularity at $t=1$. This can clearly be extended to treat point singularities at a finite number of times.
\item Using the same symbol classes one can treat other wave models with time-dependent coefficients having point singularities of suitable strength. This corresponds to the models proposed in \cite{ART19}  and will be considered in details in a forthcoming paper. 
\item A related problem are singular wave models with singularities depending on space and time. Here an adapted version of a full $\epsilon$-dependent pseudo-differential calculus has to be used in order to describe the propagation of singularities for very weak solutions. However, this is a much harder problem and the description of a local scattering process of waves (and thus wave front sets) of very weak solutions at such singularities remains challenging. 
\end{enumerate}

\subsection*{Acknowledgements} Mohammed Sebih visited the Department of Mathematics of the University of Stuttgart  supported by an Algerian Scholarship P.N.E. 2018/2019. He thanks University of Stuttgart for providing excellent working conditions.

\bibliographystyle{siam}
\bibliography{SingDiss}

\end{document}